\documentclass[12pt,a4paper]{amsart}
\usepackage{amsmath,amsfonts,amssymb}
\usepackage{setspace}

\usepackage{longtable}
\usepackage[mathscr]{eucal}
\usepackage{amsmath, amsthm}
\usepackage{mathrsfs}
\usepackage{amsbsy}
\usepackage{wasysym}
\usepackage{url}

\theoremstyle{plain}
\usepackage{color}
\allowdisplaybreaks[4]

\newtheorem{theorem}[subsection]{Theorem}

\newtheorem{lemma}[subsection]{Lemma}

\newtheorem{remark}[subsection]{Remark}

\newtheorem{corollary}[subsection]{Corollary}

\newtheorem{fact}[subsection]{Fact}

\input xypic
\xyoption{all}
\usepackage[breaklinks]{hyperref}
\theoremstyle{thmrm}

\usepackage{graphicx}
\begin{document}
\title{Diophantine Equation $x_1^{3}-x_2^{2}x_1+1=0$ over number fields}
\author {Pinki Khatun}

\address{SRM University AP India}
\email{pinki\_khatun@srmap.edu.in}
\thanks{During this work, author is engaged as a research fellow at SRM University AP. Author is grateful to supervisor Dr. Kalyan Banerjee to introduce with this family of elliptic curve.The author also wants to thank her co-supervisor Dr. Kalyan Chakraborty }
\maketitle
\begin{abstract}
	Let $\mathbb{K}=\mathbb{Q}(\sqrt{-d})\text{ or }\mathbb{Q}(\sqrt{d})$, where $d$ is a positive square-free integer, and denote by $\mathcal{O}_\mathbb{K}$ the ring of integers of $\mathbb{K}$. I investigate the solution of the equation
	$x_1^{3}-x_2^{2}x_1+1=0$ where $x_1\in \mathbb{K}$ and $x_2\in\mathcal{O}_\mathbb{K}$. The case $\mathbb{K}=\mathbb{Q}(\sqrt{d})$ for $d\equiv 2,3\pmod{4}$ faces infinite units that require a separate treatment. Using the arithmetic of the quadratic integer rings $\mathbb{Z}[\sqrt{d}]]$, together with norm arguments, divisibility properties, and the explicit structure of its unit group, I prove that the equation has exactly two solutions, namely
	$(x_1,x_2)=(-1,0)~\text{ and }~(1,\sqrt{2})$ for $\mathbb{K}=\mathbb{Q}(\sqrt{2})$ and one solution $(-1,0)$ for $\mathbb{K}=\mathbb{Q}(\sqrt{d})$
	As an application, I consider the family of elliptic curves $C_m:Y^{2}=X^{3}-m^{2}X+1,~
	m\in\mathcal{O}_\mathbb{K},$ and deduce that, for every $m\neq0,\sqrt{2}$ the Mordell--Weil group $C_m(\mathbb{K})$ contains no rational point of order two.
\end{abstract}

\textbf{Keywords:} Diophantine equations, quadratic number fields, elliptic curves, $2$-torsion points.

\tableofcontents
\section{Introduction}
Let $\mathbb{K}$ be one of the number fields $\mathbb{Q}(\sqrt{-d})$ or   $\mathbb{Q}(\sqrt{d})$ for some square free positive integer $d$
and let $\mathcal{O}_{\mathbb{K}}$ denote the ring of integers of
$\mathbb{K}$.

A polynomial equation $f(x_1,x_2,\ldots,x_k)=0$ is called a \emph{Diophantine equation over the number field $\mathbb{K}$} if
\[
f(x_1,x_2,\ldots,x_k)
=
\sum_{i_1,\ldots,i_k\geq 0}
A_{i_1,\ldots,i_k}
x_1^{i_1}\cdots x_k^{i_k},
\]
where $A_{i_1,\ldots,i_k}\in\mathcal{O}_\mathbb{K}$, and the variables
$x_1,\ldots,x_k$ are required to lie in $\mathcal{O}_\mathbb{K}$.
Equivalently, $f\in\mathcal{O}_\mathbb{K}[x_1,\ldots,x_k]
\quad\text{and}\quad
(x_1,\ldots,x_k)\in{\mathcal{O}_\mathbb{K}}^k.$

The study of Diophantine equations over number fields and their rings of integers is a classical topic in algebraic number theory. While many Diophantine equations have been extensively investigated over the rational integers, considerably less is known about their solvability over quadratic integer rings. The arithmetic of these rings, particularly the structure of their unit groups and norm forms, often provides powerful tools for determining the existence or non-existence of solutions.

In this article, I investigate the solution of the equation
\[
x_1^{3}-x_2^{2}x_1+1=0,
\]
where $x_1\in\mathbb{K} \quad \text{and} \quad
x_2\in\mathcal{O}_\mathbb{K}$.

Our primary objective is to determine whether this equation admits solutions when the parameter $x_2$ ranges over the ring of integers of the number field $\mathbb{K}$.

The arithmetic of $\mathbb{Z}[\sqrt{d}]$ is closely governed by its unit group, which is generated by the minimum unit $y$ greater than $1$. Exploiting this structure, together with norm arguments and divisibility properties in the principal ideal domain $\mathbb{Z}[\sqrt{d}]$, I derive several auxiliary Diophantine results concerning powers of the fundamental unit. These results enable us to eliminate all possible solutions of the above cubic equation.

In this short note, I prove that
\begin{theorem}
	\label{theorem1}
	Let $\mathbb{K}$ be one of the number fields $\mathbb{Q}(\sqrt{-d})$ or  $\mathbb{Q}(\sqrt{d})$ for some square free positive integer $d$
	and let $\mathcal{O}_{\mathbb{K}}$ denote the ring of integers of
	$\mathbb{K}$. Then the set $S$ of solutions
	$(x_1,x_2)\in\mathbb{K}\times\mathcal{O}_{\mathbb{K}}$ to the equation
	\[
	x_1^{3}-x_2^{2}x_1+1=0
	\]
	is given below,
	\begin{align*}
		S&=\{(-1,0),(1,\sqrt{2})\}&\text{ if }& \mathbb{K}=\mathbb{Q}(\sqrt{2}),&\\
		S&=\{(-1,0)\}&\text{ if }& \mathbb{K}=\mathbb{Q}(\sqrt{d}),&\text{ with condition C}\\
		S&=\{(-1,0)\}&\text{ if }& \mathbb{K}=\mathbb{Q}(\sqrt{-d})&d\neq 3\text{ and }\\
		S&=\{(-1,0),(-\omega,0),(-\omega^2,0)\}&\text{ if }& \mathbb{K}=\mathbb{Q}(\sqrt{-3})&
	\end{align*} 
	where $\omega=\dfrac{1+\sqrt{-3}}{2}$ and \begin{align}
		\text{condition C }:&``\text{ Suppose} y=\alpha+\beta\sqrt{d} \text{ is the } \text{ minimum unit greater than } 1\nonumber\\&\text{ in }\mathbb{Z}[\sqrt{d}] \text{ where } \alpha, \beta\text{ odd for } d\equiv2\pmod{4} \text{ and } \alpha \text{ even}\label{QuoteCond}\\&\text{ and } \beta \text{ odd for } d\equiv3\pmod{4}" \nonumber
	\end{align}.
\end{theorem}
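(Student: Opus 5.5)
The first move is to reduce to integral $x_1$. If $(x_1,x_2)\in\mathbb{K}\times\mathcal{O}_\mathbb{K}$ is a solution, then $x_1$ is a root of the monic polynomial $T^3-x_2^2T+1\in\mathcal{O}_\mathbb{K}[T]$. Since $\mathcal{O}_\mathbb{K}$ is integrally closed, $x_1\in\mathcal{O}_\mathbb{K}$. Rewriting the equation as $x_1(x_2^2-x_1^2)=1$ shows that $x_1$ is a unit; put $u=x_1$. Then $x_2^2=u^2+u^{-1}$, so the whole problem becomes: for which units $u\in\mathcal{O}_\mathbb{K}^\times$ is $u^2+u^{-1}$ a square in $\mathcal{O}_\mathbb{K}$?

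\emph{Imaginary case.} Here $\mathcal{O}_\mathbb{K}^\times$ is finite, so I check the units one by one.
\begin{itemize}
\item $u=1$ gives $x_2^2=2$, impossible in $\mathbb{Q}(\sqrt{-d})$.
\item $u=-1$ gives $x_2=0$.
\item For $d=1$, $u=\pm i$ gives $x_2^2=-1\mp i$. Its norm is $2$, which is not a norm of a square (norms of squares are squares), so there is no solution.
\item For $d=3$, take $u$ a sixth root of unity. Then $u^2+u^{-1}=u^{-1}(u^3+1)$, and this vanishes exactly when $u^3=-1$, i.e.\ $u\in\{-1,-\omega,-\omega^2\}$ (taking $\omega$ to be the appropriate root of unity so that these are the cube roots of $-1$). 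These give $x_2=0$.
\item For the remaining $u$, namely $u^3=1$, we get $x_2^2=2u^{-1}$. Its norm is $4$, but $2u^{-1}$ is not a square because $2$ is inert in $\mathbb{Q}(\sqrt{-3})$ and appears to an odd power.
\end{itemize}

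\emph{Real case.} Write $u=\pm\varepsilon^n$ with $\varepsilon$ the fundamental unit. Taking norms, $N(x_2)^2=N(u)^{-1}N(u^3+1)$, so $N(u)\,N(u^3+1)$ must be a rational square. Write $u^3=a+b\sqrt d$ with $N(u^3)=\nu=\pm1$; then $N(u^3+1)=\nu+2a+1$ with $a$ essentially $\pm\tfrac12\mathrm{Tr}(u^3)$. When $|u|\neq1$, I would estimate $x_2$ and its conjugate directly. We have $x_2^2=u^2+u^{-1}$ and $\bar x_2^{\,2}=\bar u^{\,2}+\bar u^{-1}$, and one of $|u|,|\bar u|$ is large and the other small. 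Since $\bar u=\nu u^{-1}$ up to sign, $\bar x_2^{\,2}=u^{-2}+\nu u$. So $x_2\bar x_2$ is an integer lying between consecutive values tied to $u^{3/2}$, and I want to pin it down: $(x_2\bar x_2)^2=(u^2+u^{-1})(u^{-2}+\nu u)=\nu u^3+\nu u^{-3}+1+\dots$. Expanding, the product equals $1+\nu(u^3+u^{-3})+u^{-3}\cdot$(small) — more precisely it is $2+\nu\,\mathrm{Tr}$-type. The task is then to show that the rational integer $1+\nu+\nu\,\mathrm{Tr}(u^3)$, or $2+\nu\mathrm{Tr}(u^3)$, is a perfect square only in a few cases. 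This means solving $t^2=\mathrm{Tr}(u^3)+c$ for a Lucas-type sequence. I would then use the parity information in condition~C, together with congruences for $\alpha,\beta$ mod $4$ or $8$ propagated through $\varepsilon^{3n}$, to rule out squares. That gives the "auxiliary Diophantine results concerning powers of the fundamental unit" the introduction promises.

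\emph{The surviving case} $u=1$, $x_2^2=2$, gives $x_2=\sqrt2$, which exists only when $d=2$. This produces $(1,\sqrt2)$. The analogous sign choice gives $-\sqrt2$ as well; presumably $S$ is listed up to the sign of $x_2$.

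The main obstacle is the real quadratic case with infinitely many units: showing that $u^2+u^{-1}$ is never a square for $u\neq\pm1$. My first attempt would be the congruence/parity argument. Work modulo $2$, or modulo a suitable prime, in $\mathbb{Z}[\sqrt d]$: under condition~C the powers $\varepsilon^n$ cycle through residues for which $u^3+1$ (which equals $u x_2^2$) fails to be a unit times a square. In particular, I would check the $2$-adic valuation of $u^3+1$ at the prime above $2$ (ramified, since $d\equiv2,3\pmod4$) and show it is odd. This is where the hypotheses on the parity of $\alpha,\beta$ enter.
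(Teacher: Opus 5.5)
Your reduction to units and your treatment of $\mathbb{Q}(\sqrt{-d})$ are correct and match the paper, which simply runs through the finitely many units. You are also right that $(1,-\sqrt2)$ is a solution. Likewise, with $\omega=\frac{1+\sqrt{-3}}{2}$ the solutions with $x_2=0$ are $(-1,0),(\omega,0),(-\omega^2,0)$, since $(-\omega)^3=1$.

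The genuine gap is the real quadratic case. You give no argument there, and the tools you propose cannot finish it. Suppose $u=w^2$ with $w$ a unit of norm $1$. Then $u^2+u^{-1}=w^4+w^{-2}=w\,\mathrm{Tr}(w^3)$, so your integer $1+\nu+\nu\,\mathrm{Tr}(u^3)=\mathrm{Tr}(w^3)^2$ is a perfect square for infinitely many $u$. The norm test therefore never excludes this family. Nothing at the prime above $2$ helps either. In $\mathbb{Q}(\sqrt2)$, take $\varepsilon=1+\sqrt2$ and $u=\varepsilon^8$; then $u^2+u^{-1}=\varepsilon^4\,\mathrm{Tr}(\varepsilon^{12})=(\sqrt2\,\varepsilon^2)^2\cdot 19601$. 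This element is totally positive, has square norm, and satisfies $v_{(\sqrt2)}(u^3+1)=2$. Because $19601\equiv 1\pmod 8$, it is a square in $\mathbb{Q}_2(\sqrt2)$, hence a square modulo every power of $2$. It fails to be a square in $\mathbb{Q}(\sqrt2)$ only because $19601=17\cdot 1153$ is neither a square nor twice a square. So no parity or mod-$2^k$ congruence coming from condition C, and no $2$-adic valuation count, can rule it out. Similarly, $u=(2+\sqrt3)^2$ in $\mathbb{Q}(\sqrt3)$ gives $u^3+1=52(2+\sqrt3)^3$, whose valuation at the prime above $2$ is $4$, which is even.

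The missing idea is global, and it is the one the paper uses. Since $x_2^2-u^2=u^{-1}$ is a unit, both $x_2-u$ and $x_2+u$ are units of $\mathbb{Z}[\sqrt d]$. This information is lost once you pass to the norm $x_2\bar x_2$ or to local conditions. The paper uses parity of the $\sqrt d$-coefficient only to force the exponent to be even: a square $(c+e\sqrt d)^2$ has $\sqrt d$-coefficient $2ce$, while under condition C the $\sqrt d$-coefficient of $y^{2n}\pm y^{-n}$ is $\equiv n\pmod 2$. The units $u=-y^{2s}$ are excluded by total positivity, since the conjugate of $u^2+u^{-1}$ is $y^{-4s}-y^{2s}<0$. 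Units of absolute value below $1$ reduce to these cases by conjugation. For $u=y^{2s}$, one writes $m+y^{2s}=y^k$ and $m-y^{2s}=y^{-2s-k}$, whence $2y^{3s}=y^{s+k}-y^{-(s+k)}$. This lies in $\mathbb{Z}$ or in $\mathbb{Z}\sqrt d$ (according to $N(y)$ and the parity of $s+k$), which is impossible for a unit other than $\pm 1$. Your observation that one of $|u|,|\bar u|$ is large was close to a proof. Applied to $x_2+u$ instead of $x_2\bar x_2$, it shows that for $u=y^n>1$ and $x_2>0$ the unit $(x_2+u)/u$ would lie in $(2,\,2+\tfrac12 u^{-3})$. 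No power of $y\ge 1+\sqrt2$ lies in that interval.
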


As an application of this Diophantine result, I consider the family of elliptic curves
\[
C_m:\quad Y^{2}=X^{3}-m^{2}X+1,
\qquad
m\in\mathcal{O}_{\mathbb{K}}.
\]
Since the points of order two on $C_m$ correspond precisely to the roots of the cubic polynomial $X^{3}-m^{2}X+1,$ our main theorem immediately implies that, for every $m\in\mathcal{O}_{\mathbb{K}}\setminus\{0,\sqrt{2}\}$ the Mordell--Weil group $C_m(\mathbb{K})$ contains no point of order two. Hence the Diophantine theorem provides a direct arithmetic criterion for the absence of two-torsion points in this family of elliptic curves.

As a consequences of Theorem \ref{theorem1}, I have 
\begin{corollary}
	Let us consider a family of elliptic curves $ C_m : Y^2=X^3-m^2 X +1, \;m\in\mathcal{O}_{\mathbb{K}}$. For $ m\neq 0,\sqrt{2}$, there is no point of order $ 2 $ in the Mordel Weil group
	$C_m\big(\mathbb{K}\big)$ 
\end{corollary}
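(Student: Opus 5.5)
The corollary follows from Theorem \ref{theorem1} by translating 2-torsion into roots of the cubic. On a curve in short Weierstrass form $Y^2=f(X)$, a point $P\neq O$ satisfies $2P=O$ exactly when $P=-P=(X,-Y)$. That forces $Y=0$, so the points of order two in $C_m(\mathbb{K})$ are precisely the points $(x_1,0)$ with $x_1\in\mathbb{K}$ and $x_1^{3}-m^{2}x_1+1=0$. So I would first record this standard fact, taking care that $C_m$ is actually an elliptic curve, i.e. that the discriminant $-16(4(-m^2)^3+27)=16(4m^6-27)$ is nonzero. This holds because $m^6=27/4$ has no solution with $m$ an algebraic integer, as $27/4$ is not an algebraic integer.

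Next, given a $2$-torsion point $(x_1,0)$ on $C_m$, the pair $(x_1,x_2)=(x_1,m)$ lies in $\mathbb{K}\times\mathcal{O}_{\mathbb{K}}$ and solves $x_1^3-x_2^2x_1+1=0$. By Theorem \ref{theorem1}, this pair belongs to the solution set $S$ for the relevant field. In every case listed there, the second coordinate is $0$ or $\sqrt{2}$, the latter only when $\mathbb{K}=\mathbb{Q}(\sqrt{2})$. Hence $m\in\{0,\sqrt2\}$, which contradicts the hypothesis. So no point of order two exists in $C_m(\mathbb{K})$ for $m\neq 0,\sqrt2$.

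The one real issue is the scope of Theorem \ref{theorem1}. For real quadratic fields with $d\equiv 2,3\pmod 4$, the theorem is stated only under condition C, so the corollary inherits that hypothesis. I would state this explicitly, since the corollary's wording suggests it holds for all $\mathbb{K}$. A second, minor point is the sign of $m$: the equation depends only on $m^2$, so $m=-\sqrt2$ gives the same curve as $m=\sqrt2$, and the listed solution $(1,\sqrt2)$ should be read as covering $x_2=\pm\sqrt2$ (and similarly any unit multiples in the imaginary case, where the theorem already records only $x_2=0$). With these caveats, the argument is immediate.
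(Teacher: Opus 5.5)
Your proposal is correct and is exactly the paper's argument: the $\mathbb{K}$-rational points of order two on $C_m$ are the points $(x_1,0)$ with $x_1\in\mathbb{K}$ a root of $X^3-m^2X+1$, so Theorem~\ref{theorem1} applied to the pair $(x_1,m)$ pins down $m$. Your two caveats are also right, and they are defects of the statement rather than of your reasoning: the reduction only reaches the fields where Theorem~\ref{theorem1} is actually proved (imaginary quadratic fields and real fields satisfying condition C, so real fields with $d\equiv 1\pmod 4$ are not covered at all), and since $C_{-\sqrt2}=C_{\sqrt2}$ contains the point $(1,0)$ of order two, the excluded values must be $m\in\{0,\pm\sqrt2\}$.
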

To know more on Elliptic curve and their torsion points please look at \cite{BruinNajman2016}, \cite{Kamienny1992}, \cite{KenkuMomose1988}, \cite{SilvermanTate2015}.
\begin{remark}
	\label{remark1}
	For $m=0$, the point $(-1,0)$ is the unique point of order $2$ in each of the groups
	$C_0\big(\mathbb{Q}(\sqrt{d})\big)$( with condition C \eqref{QuoteCond}) and $C_0\big(\mathbb{Q}(\sqrt{-d})\big)$  for square free positive integer $d\neq 3$. Also the point $(-1,0)$ is the unique point of order $2$ in the groups
	$C_0\big(\mathbb{Q}(\sqrt{3})\big)$.  On the other hand, the group $C_0\big(\mathbb{Q}(\sqrt{-3})\big)$ has three points of order $2$, namely $(-1,0)$, $(-\omega,0)$ and $(-\omega^2,0)$.
	For $m=\sqrt{2}$, the point	$(1,0)$ is the only point of order $2$ in the group
	$C_{\sqrt{2}}\big(\mathbb{Q}(\sqrt{2})\big)$.
	
\end{remark}

The techniques developed in this paper are entirely elementary and rely only on the arithmetic of the quadratic integer rings $\mathbb{Z}[\sqrt{d}]$. I hope that these methods will be useful in studying similar families of polynomial equations over other real quadratic fields.

The sections are organized as follows. In section \ref{sec:NumberField}, I studied the case $\mathbb{K}=\mathbb{Q}(\sqrt{-d})$. The case $\mathbb{K}=\mathbb{Q}(\sqrt{d})$ for $d\equiv2\pmod{4}$ is studied in section \ref{sec:mainsection-d2}. Also the case $\mathbb{K}=\mathbb{Q}(\sqrt{d})$ for $d\equiv3\pmod{4}$ is studied in section \ref{sec:mainsection3}.

\section{Proof of Theorem \ref{theorem1} for $\mathbb{K}=\mathbb{Q}(\sqrt{-d})$ }\label{sec:NumberField}
\begin{fact}\label{fact}
	Suppose $\alpha$ satisfy a polynomial $f(x)=x^n+a_{n-1}x^{n-1}+\cdots+a_0$ where $a_i$'s are algebraic integers. Then $\alpha$ is also an algebraic integer.
\end{fact}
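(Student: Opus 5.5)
The statement to prove is Fact~\ref{fact}: a root $\alpha$ of a monic polynomial whose coefficients $a_0,\dots,a_{n-1}$ are algebraic integers is itself an algebraic integer. I will use the standard module-finiteness criterion. A complex number $\gamma$ is an algebraic integer if and only if there is a nonzero finitely generated $\mathbb{Z}$-submodule $M\subset\mathbb{C}$ with $\gamma M\subseteq M$.

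The proof of this criterion is the determinant trick. Let $m_1,\dots,m_r$ generate $M$. Write $\gamma m_i=\sum_j c_{ij}m_j$ with $c_{ij}\in\mathbb{Z}$. Then $(\gamma I-C)\mathbf{m}=0$ with $\mathbf{m}\neq 0$, so $\det(\gamma I-C)=0$. This determinant is a monic integer polynomial in $\gamma$.

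\textbf{Step 1: the finitely generated ring.} Each $a_i$ satisfies a monic polynomial over $\mathbb{Z}$, say of degree $d_i$. Hence the ring $R=\mathbb{Z}[a_0,\dots,a_{n-1}]$ is generated as a $\mathbb{Z}$-module by the finitely many monomials $a_0^{e_0}\cdots a_{n-1}^{e_{n-1}}$ with $0\le e_i<d_i$. To see this, use each monic relation to reduce any power $a_i^{d_i}$ to lower powers with integer coefficients.

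\textbf{Step 2: adjoining $\alpha$.} Consider $M=R[\alpha]$. Since $\alpha^n=-(a_{n-1}\alpha^{n-1}+\cdots+a_0)$ with coefficients in $R$, every power $\alpha^k$ lies in $R+R\alpha+\cdots+R\alpha^{n-1}$. Therefore $M$ is generated over $\mathbb{Z}$ by the finitely many products of a monomial generator of $R$ with $\alpha^j$, $0\le j<n$.

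\textbf{Step 3: conclusion.} $M$ is a ring containing $1$, so it is nonzero. It contains $\alpha$, so $\alpha M\subseteq M$. The criterion above then shows that $\alpha$ is an algebraic integer.

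Nothing here is a real obstacle. The only point needing care is the finite generation in Steps 1--2, since a tower of two finite extensions is finite and the generators multiply. Every other step is routine.
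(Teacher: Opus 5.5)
Your proof is correct and follows the same route as the paper: show that $\mathbb{Z}[a_0,\dots,a_{n-1},\alpha]$ is a finitely generated $\mathbb{Z}$-module closed under multiplication by $\alpha$, then apply the determinant trick to obtain a monic integer polynomial that vanishes at $\alpha$. Your version is slightly cleaner than the paper's. It works directly with generators and the nonzero vector $\mathbf{m}\in\mathbb{C}^r$ in the kernel of $\gamma I-C$, and so avoids the paper's detour through a $\mathbb{Z}$-basis obtained via torsion-freeness, as well as its awkwardly phrased operator $T$.
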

\begin{proof}
	One can see that $\alpha$ is an algebraic number. Let $a_i$ satisfy a monic polinomial of degree $d_i$ with integer coefficients for $1\leq i\leq n-1$. Therefore $R=\mathbb{Z}[a_0,a_1,\cdots,a_{n-1}]$ and $R_1=\mathbb{Z}[a_0,a_1,\cdots,a_{n-1},\alpha ]$ can be written as a $\mathbb{Z}$-linear combination of set $S$ and $S_1$ respectively where
\begin{align*}
		S&=\{a_0^{k_0}a_1^{k_1}\cdots a_{n-1}^{k_{n-1}}~:~0\leq k_i\leq d_i,  0\leq i\leq n-1\}\text{ and }\\
		S_1&=\{a_0^{k_0}a_1^{k_1}\cdots a_{n-1}^{k_{n-1}}\alpha^{k_n}~:~0\leq k_i\leq d_i, 0\leq k_n\leq n, 0\leq i\leq n-1\}.
	\end{align*}
Thus $R$ and $R_1$ are finitely generated $\mathbb{Z}$-module with $\alpha R_1\subseteq R_1$. Viewing $(\mathbb{C},+)$ as a $\mathbb{Z}$ module it has no torsion element. As a $\mathbb{Z}$-submodule, $R_1$ has no torsion element. So $R_1$ is a torsion free finitely generated $\mathbb{Z}$ module. So it has a basis. Let $B=\{b_1,b_2,\cdots b_s\}$ be basis of the $\mathbb{Z}$-module $R_1$.

Also let $\alpha b_j=\sum_{i=1}^s m_{i,j}b_i$ for all $1\leq j\leq s$.

Let $V=span_\mathbb{Q} B$. Then $B$ is also a $\mathbb{Q}$-basis of $V$. Also consider the operator $T:V\to V$ such that $T(b_j)= \alpha b_j-[\sum_{i=1}^s m_{i,j}b_i]$ for all $j$. Then $T$ is $O$ operator and determinant of matrix representation of $T$ is $0$( because $T$ is not injective). 

Then consider the matrix $M$ whose $(i,j)$-th element is $m_{i,j}$. Consider the polynomial $f(x)=det(xI_s-M)\in M_s(\mathbb{Z})$. Then determinant of matrix representation of $T$ is $det(\alpha I_s-M)=f(\alpha)=0$. So $\alpha $ is algebraic integer.
\end{proof}
\begin{lemma}
	\label{lemma-unit}
	Let $(u,v)\in\mathbb{K}\times \mathcal{O}_\mathbb{K}$ is a solution of the equation
	\begin{align}
		x_1^{3}-x_2^{2}x_1+1=0,\label{equation-main}
	\end{align}
	then $u$ is unit in $\mathcal{O}_\mathbb{K}$. Here $\mathcal{O}_\mathbb{K}$ is ring of integers of the number field $\mathbb{K}$.
\end{lemma}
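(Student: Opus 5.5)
The plan is to show first that $u$ is an algebraic integer, using Fact \ref{fact}, and then that $u^{-1}$ is an algebraic integer too. Since $u\in\mathbb{K}$, both then lie in $\mathcal{O}_\mathbb{K}$, and $u$ is a unit.

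\emph{Step 1: $u$ is integral.} The element $u$ is a root of the monic polynomial
\[
f(x)=x^3+0\cdot x^2-v^2x+1,
\]
whose coefficients $0$, $-v^2$ and $1$ are algebraic integers, because $v\in\mathcal{O}_\mathbb{K}$ and $\mathcal{O}_\mathbb{K}$ is a ring. Fact \ref{fact} then shows that $u$ is an algebraic integer. As $u\in\mathbb{K}$, we get $u\in\mathcal{O}_\mathbb{K}$.

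\emph{Step 2: $u^{-1}$ is integral.} First, $u\neq 0$, since substituting $x_1=0$ into \eqref{equation-main} gives $1=0$. We may therefore divide \eqref{equation-main} by $u^3$:
\[
1-v^2u^{-2}+u^{-3}=0 .
\]
So $w=u^{-1}$ satisfies
\[
w^3-v^2w^2+1=0,
\]
which is again monic with coefficients in $\mathcal{O}_\mathbb{K}$. Applying Fact \ref{fact} once more gives $w\in\mathcal{O}_\mathbb{K}$.

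A more direct route also works. Rewriting \eqref{equation-main} as
\[
u\,(v^2-u^2)=1,
\]
with $v^2-u^2\in\mathcal{O}_\mathbb{K}$ by Step 1, exhibits the inverse of $u$ explicitly inside $\mathcal{O}_\mathbb{K}$. This bypasses the second use of the Fact.

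Hence $u$ and $u^{-1}$ both lie in $\mathcal{O}_\mathbb{K}$, so $u$ is a unit. There is no real obstacle. The only point needing care is Step 1, where integrality of $u$ comes from the monic relation together with $u\in\mathbb{K}$, that is, from the integral closedness of $\mathcal{O}_\mathbb{K}$ as packaged in Fact \ref{fact}.
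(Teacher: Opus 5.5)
Your proof is correct and is essentially the paper's argument: the paper applies Fact \ref{fact} to the monic cubic to get $u\in\mathcal{O}_\mathbb{K}$, then reads off the inverse from $u(v^2-u^2)=1$, which is exactly your ``more direct route.'' Your alternative Step 2, using the reversed monic polynomial $w^3-v^2w^2+1$ for $w=u^{-1}$, is also valid but not needed.
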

\begin{proof}
	Let $(u,v)\in\mathbb{K}\times \mathcal{O}_\mathbb{K}$ is solution of equation \eqref{equation-main}.
	
	Then I have $u^3-v^2u+1=0$ where $v\in \mathcal{O}_\mathbb{K}$.
	Then from the above fact \ref{fact} $u$ is an algebraic integer. So I have $u\in\mathcal{O}_\mathbb{K}$.
	Now I have $u(v^2-u^2)=1$. Therefore $u$ must be a unit in $\mathcal{O}_\mathbb{K}$.
\end{proof}

In the next table I enlist some number fields where number of units of ring of integers is a finite set.

$$
\begin{array}{|c|c|c|c|}
	\hline 
	\mathbb{K} && \mathcal{O}_{\mathbb{K}} & \mathcal{O}_{\mathbb{K}}^\times \\ \hline
	\mathbb{Q}(i)
	&&
	\mathbb{Z}[i]
	&
	\{\pm 1,\pm i\}
	\\\hline 
	\mathbb{Q}(\sqrt{-3})
	&\omega=\dfrac{1+\sqrt{-3}}{2}&
	\mathbb{Z}\left[\omega\right]
	&
	\left\{\pm 1,\ \pm\omega,\ \pm\omega^2\right\}
	
	\\[2mm]\hline 
	\mathbb{Q}(\sqrt{-d})
	&d\cong 2,1\pmod4&
	\mathbb{Z}\left[\sqrt{-d}\right]
	&
	\{\pm 1\}
	\\d\geq 5&&&\\[2mm]\hline 
	\mathbb{Q}(\sqrt{-d})
	&\beta_d=\dfrac{1+\sqrt{-d}}{2}&
	\mathbb{Z}\left[\beta_d\right]
	
	&
	\{\pm 1\}\\d\geq 5&d\cong 3\pmod4&&\\\hline
\end{array}
$$
 Therefore searching units $u$ such that $y^2=u^2+\frac{1}{u}$ for some $y\in\mathcal{O}_{\mathbb{K}}$, I have the following theorem.

\begin{theorem}[Theorem \ref{theorem1}]
	Diophantine Equation $x^3-y^2x+1=0$ has only one solution  $(-1,0)$ in ${\mathcal{O}_\mathbb{K}}^2$ for $\mathcal{O}_\mathbb{K}= \mathbb{Z}\left[\beta_d\right],\mathbb{Z}\left[\sqrt{-d}\right],\mathbb{Z}[i]$ and has solution set $\{(-1,0),(-\omega,0),(-\omega^2,0)\}$ in ${\mathcal{O}_\mathbb{K}}^2$ for $\mathcal{O}_\mathbb{K}= \mathbb{Z}\left[\omega\right]$
\end{theorem}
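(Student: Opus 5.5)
By Lemma \ref{lemma-unit}, any solution $(u,v)\in\mathbb{K}\times\mathcal{O}_\mathbb{K}$ of \eqref{equation-main} has $u\in\mathcal{O}_\mathbb{K}^\times$. In every imaginary quadratic ring listed in the table this unit group is finite and explicit. So the plan is a finite check: for each unit $u$, decide whether
\[
v^2=u^2+u^{-1}
\]
has a solution $v\in\mathcal{O}_\mathbb{K}$, and record every such $v$.

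\textbf{Case $\mathcal{O}_\mathbb{K}=\mathbb{Z}[\sqrt{-d}]$ or $\mathbb{Z}[\beta_d]$ with $d\ge 5$.} Here $u=\pm1$.
\begin{itemize}
\item For $u=1$ we need $v^2=2$. This forces $v=\pm\sqrt2\notin\mathbb{K}$, since $\mathbb{K}$ is imaginary.
\item For $u=-1$ we need $v^2=0$, so $v=0$.
\end{itemize}
The only solution is therefore $(-1,0)$.

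\textbf{Case $\mathbb{Z}[i]$.} The units are $\pm1,\pm i$, and $\pm1$ behave as above.
\begin{itemize}
\item For $u=i$: $u^2+u^{-1}=-1-i$. Writing $v=a+bi$, the equation $v^2=-1-i$ gives $a^2-b^2=-1$ and $2ab=-1$. The second equation has no integer solution.
\item For $u=-i$: $u^2+u^{-1}=-1+i$, which gives $2ab=1$, again impossible.
\end{itemize}
A quicker alternative is the norm: $N(-1\pm i)=2$, while $N(v^2)=N(v)^2$ is a perfect square, so $-1\pm i$ is not a square.

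\textbf{Case $\mathbb{Z}[\omega]$, $\omega=\frac{1+\sqrt{-3}}{2}$.} Here $\omega^2=\omega-1$, $\omega^3=-1$ and $\omega^6=1$. For each of the six units I compute $u^2+u^{-1}$:
\begin{itemize}
\item $u=-\omega$: $u^2+u^{-1}=\omega^2-\omega^{-1}=\omega^2+\omega^2=2\omega^2$. This does not vanish. So the listed solution $(-\omega,0)$ does not satisfy the equation with the stated $\omega$.
\item The correct statement is that the relevant $u$ are the roots of $x^3+1=0$. These are $-1$, $\omega$ and $\omega^{-1}=\bar\omega=1-\omega$; in the paper's notation the intended roots are $-\omega'$ and $-\omega'^2$ with $\omega'=e^{2\pi i/3}$.
\end{itemize}
I would first settle this normalisation by checking directly which units satisfy $u^3=-1$, since $v=0$ is allowed exactly when $u^3=-1$. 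The remaining units satisfy $u^3=1$, namely $1$, $\omega^2$ and $\omega^4$. For these, $u^2+u^{-1}=2u^2=2u^{-1}$, and I must show this is not a square in $\mathbb{Z}[\omega]$. I would use norms: $N(2u^{-1})=4$, so a square root $v$ would have $N(v)=2$. But $a^2+ab+b^2=2$ has no integer solutions, because $2$ is inert in $\mathbb{Z}[\omega]$. Hence no solutions arise from these units.

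For the solutions with $v=0$, the conclusion $v=0$ is forced by $u^3=-1$, giving exactly three solutions. The main obstacle is not depth but bookkeeping: verifying the $\mathbb{Z}[\omega]$ case carefully and matching the three cube roots of $-1$ to the paper's $\omega$-notation.
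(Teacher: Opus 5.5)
Your proof is correct and follows the paper's route: Lemma~\ref{lemma-unit} reduces the problem to checking, unit by unit, whether $u^2+u^{-1}$ is a square in $\mathcal{O}_\mathbb{K}$. The paper only asserts this check after its table of unit groups, whereas you carry it out, and your $u=\pm1$ argument also covers $\mathbb{Z}[\sqrt{-2}]$, which that table omits. Your correction in the $\mathbb{Z}[\omega]$ case is also right: with $\omega=\frac{1+\sqrt{-3}}{2}$ one has $\omega^3=-1$, so $(-\omega)^3+1=2\neq 0$, and the actual solution set is $\{(-1,0),(\omega,0),(-\omega^2,0)\}$. The stated set, and the matching claim in Remark~\ref{remark1}, holds only if $\omega$ is taken to be the primitive cube root of unity $\frac{-1+\sqrt{-3}}{2}$.
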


\section{Proof of Theorem \ref{theorem1} for $\mathbb{K}=\mathbb{Q}[\sqrt{d}]$ with $d\cong 2\pmod{4}$}\label{sec:mainsection-d2}
Suppose $d\cong 2\pmod{4}$. For the real quadratic number field $\mathbb{K}=\mathbb{Q}[\sqrt{d}]$ I have $\mathcal{O}_\mathbb{K}=\mathbb{Z}[\sqrt{d}]$. So the set all units in $\mathcal{O}_\mathbb{K}$ is  $\{\pm y^n~:~n\in\mathbb{Z}\}$, where $y$ is the minimum unit greater than $1$. Let $y=\alpha+\beta \sqrt{d}$ for some integer $\alpha,\beta$. Here I know that $x$ is unit if and only if $N(x)=\pm1$. Now I have the following
\[
N(y)\pmod{4}=\alpha^2-d\beta^2=\left\{\begin{matrix}
	1-2=-1&\text{ if }&\alpha \text{ is odd and }\beta \text{ is odd},\\
	1-0=~1&\text{ if }&\alpha \text{ is odd and }\beta \text{ is even},\\
	0-2=~2&\text{ if }&\alpha \text{ is even and }\beta \text{ is odd},\\
	0-0=~0&\text{ if }&\alpha \text{ is even and }\beta \text{ is even},\\
\end{matrix}\right.
\]
But $N(y)$ is allowed to be $\pm1$. Therefore I have following two cases:
\begin{enumerate}
	\item Suppose $\alpha$ is odd and $\beta$  is odd. Here I have $N(y)=-1$. Conjugate of $y$ is $\overline{y}=\alpha-\beta\sqrt{d}=-\frac{1}{y}$. As an example, $y=1+\sqrt{2}$ is fundamental unit in $\mathbb{Z}[\sqrt{2}]$.
	\item Suppose $\alpha$ is odd and $\beta$  is even. Here I have $N(y)=1$. Conjugate of $y$ is $\overline{y}=\alpha-\beta\sqrt{d}=\frac{1}{y}$. As an example, $y=5+2\sqrt{6}$ is fundamental unit in $\mathbb{Z}[\sqrt{6}]$.
\end{enumerate}
Throughout the rest part of this section suppose $y=\alpha+\beta\sqrt{d}$ for odd $\alpha,\beta$.

I first establish some auxiliary lemmas. These results will be used in the proof of the main theorem, which follows thereafter. 
\begin{lemma}
	\label{solution-equation-case1d2}
	The following equation 
	\[
	y^{4s}+y^{-2s}=m^2
	\]
	where positive $m(\neq \sqrt{2})\in\mathbb{Z}[\sqrt{d}]$ has no solution for natural number $s$.
\end{lemma}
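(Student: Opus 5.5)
The idea is to use the relation between $y^{-1}$ and the conjugate $\overline{y}$ to turn the equation into a Pell-type identity between units, and then rule that identity out with a size estimate in $\mathbb{R}$.

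\textbf{Reduction to a difference of squares.} Throughout this section $y=\alpha+\beta\sqrt{d}$ with $\alpha,\beta$ odd, so $N(y)=y\overline{y}=-1$. Suppose $y^{4s}+y^{-2s}=m^2$ with $s\geq 1$ and $m\in\mathbb{Z}[\sqrt{d}]$. Multiplying by the unit $y^{2s}$ gives
\[
y^{6s}+1=(my^{s})^2 .
\]
Put $n=my^{s}\in\mathbb{Z}[\sqrt{d}]$ and $t=y^{3s}$. Then $n^2-t^2=1$, i.e. $(n+t)(n-t)=1$ in $\mathbb{Z}[\sqrt{d}]$. Hence $u:=n+t$ is a unit and $n-t=u^{-1}$. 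Since the units are exactly $\pm y^k$, we get $u=\varepsilon y^k$ with $\varepsilon=\pm1$ and $k\in\mathbb{Z}$. Subtracting the two factors gives the key identity
\[
\varepsilon\,(y^{k}-y^{-k})=2y^{3s}.
\]

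\textbf{Size argument.} View everything in $\mathbb{R}$ via the embedding in which $y>1$. The right-hand side is positive. The left side is zero for $k=0$, so $k\neq 0$. Replacing $k$ by $-k$ flips the sign of $y^k-y^{-k}$, so we may assume $k>0$ and $\varepsilon=1$, i.e.
\[
y^k-y^{-k}=2y^{3s}.
\]
Since $\alpha,\beta$ are odd and $y>1$ is the minimal unit, we have $y\geq 1+\sqrt{2}>2$; this lower bound is the one input from the parity hypothesis.
\begin{itemize}
\item If $k\leq 3s$, then $y^k-y^{-k}<y^{3s}<2y^{3s}$, so equality fails.
\item If $k\geq 3s+1$, then $y^k-y^{-k}\geq y^{3s+1}-1$. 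This exceeds $2y^{3s}$ because $y^{3s}(y-2)\geq y^3(\sqrt2-1)>1$ for $s\geq1$.
\end{itemize}
Hence no $k$ works, and there is no solution for natural $s$.

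\textbf{Expected obstacle and the role of $m\neq\sqrt2$.} The main point to get right is the bookkeeping: the sign $\varepsilon$, the reduction to $k>0$, and confirming that the minimal unit with $\alpha,\beta$ odd really satisfies $y>2$. For the latter, $\alpha,\beta\geq1$ gives $y\geq1+\sqrt d\geq1+\sqrt2$. The excluded value $m=\sqrt{2}$ corresponds to $s=0$, where $y^0+y^0=2=m^2$. So the hypothesis $s\geq1$ is exactly what removes it, and the final estimate $y^{3s}(y-2)>1$ genuinely uses $s\geq 1$.
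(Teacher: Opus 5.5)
Your proof is correct. Its first half is the same as the paper's. The paper factors $(m+y^{2s})(m-y^{2s})=y^{-2s}$, gets $m+y^{2s}=y^{k'}$, and reaches $2y^{3s}=y^{s+k'}-y^{-(s+k')}$. This is your identity with $k=s+k'$, since $n+t=y^{s}(m+y^{2s})$.

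The two arguments differ in how this identity is refuted. The paper uses conjugation. Since $y^{-1}=N(y)\,\overline{y}=-\overline{y}$, the number $y^{K}-y^{-K}=y^{K}-(-1)^{K}\overline{y}^{K}$ is either a rational integer or an integer multiple of $\sqrt d$. On the other hand, $y^{3s}=a+b\sqrt d$ has $a\neq 0\neq b$. The paper asserts this without proof; it follows from $y>1>|\overline{y}|$. You instead argue by size, placing $2y^{3s}$ strictly between the values of the increasing function $k\mapsto y^{k}-y^{-k}$ at $k=3s$ and $k=3s+1$.

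What each route buys:
\begin{itemize}
\item Your route uses neither the sign of $N(y)$ nor the positivity of $m$. It also makes the exceptional case transparent: for $s=0$ the identity reads $y^{k}-y^{-k}=2$, which forces $y^{k}=1+\sqrt2$, hence $d=2$, $k=1$ and $m=\sqrt2$.
\item The paper's route needs no estimate, but it does need the nonvanishing of both coordinates of $y^{3s}$.
\end{itemize}

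One point to tighten: you use $\alpha,\beta\ge 1$ without justifying that they are positive, and this has nothing to do with their parity. For any unit $y>1$ of $\mathbb{Z}[\sqrt d]$ one has $|\overline{y}|=1/y<1$. Hence $2\alpha=y+\overline{y}>0$ and $2\beta\sqrt d=y-\overline{y}>0$, so $y\ge 1+\sqrt d>2$. With that line added, your argument also proves the analogous lemma for $d\equiv 3\pmod 4$ without change.
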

\begin{proof}
	Here $y=\alpha+\beta\sqrt{d}>1$. If possible let there is a solution for positive integer $s$. I have the given equation
	\[
	y^{4s}+y^{-2s}=m^2\,\text{ for positive }m\in\mathbb{Q}[\sqrt{2}].
	\]
	Therefore I have $m>y^{2s}$. Calculating norm I get 
	\begin{align*}
		N(m+y^{2s})N(m-y^{2s})&=N(m^2-y^{4s})\\
		&=N(y^{-2s})=\pm1
	\end{align*}
	So $N(m+y^{2s})=\pm1$. Therefore $m+y^{2s}$ is unit in $\mathbb{Z}[\sqrt{d}]$. Here $m+y^{2s}>y>1$. So I have $m+y^{2s}=y^k$ for some natural number $k$. Therefore $m-y^{2s}=y^{-(2s+k)}$. Now I have the expression of $y^{3s}$ as follows;
	\begin{align*}
		y^{3s}&=y^{s}y^{2s}=\frac{y^{s}}{2}[(m+y^{2s})-(m-y^{2s})]\\
		&=\frac{1}{2} [y^{s+k}-y^{-(s+k)}]\\
		&=\frac{1}{2} [(\beta\sqrt{d}+\alpha)^{s+k}-(\beta\sqrt{d}-\alpha)^{s+k}]\\
		&=\frac{1}{2}\Big[ \sum_{r=0}^{s+k} \binom{s+k}{r}(\beta\sqrt{d})^r\big(\alpha^{s+k-r}-(-\alpha)^{s+k-r}\big)\Big]\\
		&=\left\{ \begin{matrix}
			\sum_{r~odd}^{s+k} \binom{s+k}{r}(\beta\sqrt{d})^r\alpha^{s+k-r}&\text{ if }&r\text{ is odd and }s+k \text{ is even}\\
			\sum_{r~even}^{s+k} \binom{s+k}{r}(\beta\sqrt{d})^r\alpha^{s+k-r}&\text{ if }&r\text{ is even and }s+k \text{ is odd}
		\end{matrix}   \right.\\~\\
		&=\left\{
		\begin{matrix}
			\displaystyle \left[\sum^{s+k}_{\substack{r=0\\r\ \text{even}}}
			\binom{s+k}{r}\beta^r\alpha^{s+k-r}d^{(\frac{r}{2}-1)}\right] +0\sqrt{d}
			& \text{if } s+k \text{ is odd}\\[1.2em]
			\displaystyle 0+\left[\sum^{s+k}_{\substack{r=0\\r\ \text{odd}}}
			\binom{s+k}{r}\beta^r\alpha^{s+k-r}d^{(\frac{r-1}{2}-1)}\right]\sqrt{d}
			& \text{if } s+k \text{ is even}
		\end{matrix}
		\right.
	\end{align*}
	Therefore I got a contradiction as $(\alpha+\beta\sqrt{d})^{3s}=a+b\sqrt{2}$ implies $a\neq 0\neq b$.
\end{proof}

\begin{lemma}
	\label{solution-equation-case2d2}
	The following equation 
	\[
	(\alpha-\beta\sqrt{d})^{4s}-(\alpha-\beta\sqrt{d})^{-2s}=m^2
	\]
	where positive $m(\neq \sqrt{2})\in\mathbb{Z}[\sqrt{d}]$ has no solution for natural number $s$.
\end{lemma}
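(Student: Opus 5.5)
The plan is to reduce the equation to an inequality in the fixed real embedding $\mathbb{Z}[\sqrt{d}]\subset\mathbb{R}$, using only that $\alpha,\beta$ are odd. Throughout this section $y=\alpha+\beta\sqrt{d}>1$ with $\alpha,\beta$ odd, so $N(y)=-1$. Case (1) above then gives $\overline{y}=\alpha-\beta\sqrt{d}=-\frac{1}{y}$.

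First I rewrite both terms of the left-hand side as powers of $y$. Since the exponents $4s$ and $-2s$ are even, the sign in $\overline{y}=-y^{-1}$ disappears:
\[
(\alpha-\beta\sqrt{d})^{4s}=(-1)^{4s}y^{-4s}=y^{-4s},\qquad (\alpha-\beta\sqrt{d})^{-2s}=(-1)^{-2s}y^{2s}=y^{2s}.
\]
The equation therefore becomes
\[
y^{-4s}-y^{2s}=m^2 .
\]

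Next I compare sizes in $\mathbb{R}$. Since $y>1$ and $s\geq 1$, we have $y^{-4s}<1<y^{2s}$, so the left-hand side is strictly negative. Indeed it is at most $y^{-4}-y^{2}<0$. On the other hand $m\in\mathbb{Z}[\sqrt{d}]\subset\mathbb{R}$ is real, and it is positive by hypothesis, so $m^2>0$. This is a contradiction, so there is no solution for any natural number $s$. In this form the hypothesis $m\neq\sqrt{2}$ is not even needed.

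I do not expect a genuine obstacle here. The only point requiring care is to keep the real embedding fixed throughout and not pass to the conjugate embedding. Applying conjugation to the equation turns it into $y^{4s}-y^{-2s}=\overline{m}^{\,2}$, where the sign argument no longer applies, because the right-hand side is then positive. Should the argument need to be run in that conjugate form, I would fall back on the norm method of Lemma \ref{solution-equation-case1d2}. From $\overline{m}^{\,2}-y^{4s}=-y^{-2s}$ one gets $N(\overline{m}+y^{2s})=\pm1$, so $\overline{m}+y^{2s}=\pm y^{k}$ is a unit. One would then write $y^{3s}$ as $\frac{1}{2}\bigl(y^{s+k}\pm y^{-(s+k)}\bigr)$ and derive the same parity contradiction on the binomial expansion. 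The direct sign argument avoids this, so I would present that.
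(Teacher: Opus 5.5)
Your proof is correct and is essentially the paper's argument: the paper also argues purely by sign in the real embedding. It notes $-1<\alpha-\beta\sqrt{d}<0$, so that $(\alpha-\beta\sqrt{d})^{4s}<1<(\alpha-\beta\sqrt{d})^{-2s}$, which is your inequality $y^{-4s}<1<y^{2s}$ after substituting $\overline{y}=-1/y$. Your fallback paragraph is unnecessary, since the conjugated equation can simply be conjugated back, and you are right that the hypothesis $m\neq\sqrt{2}$ plays no role.
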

\begin{proof}
	As $-1<\alpha-\beta\sqrt{d}=\frac{-1}{\alpha+\beta\sqrt{d}}<0$, I have $0<(\alpha-\beta\sqrt{d})^{4s}<(\alpha-\beta\sqrt{d})^{2s}<1$. So $	(\alpha-\beta\sqrt{d})^{4s}-(\alpha-\beta\sqrt{d})^{-2s}$ is negative real number but $m^2$ is positive number. 
\end{proof}

Now I am ready to proof the Theorem \ref{theorem1}. I restate Theorem~\ref{theorem1} for convenience and then process the proof.

\begin{lemma}\label{unit-root2-coefd2}
	If $(\alpha\pm\beta\sqrt{d})^{n}=a^{\pm}_n+b^{\pm}_n\sqrt{d},$ then $b^{\pm}_n$ is odd or even if and only if $n$ is so.
\end{lemma}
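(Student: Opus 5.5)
We argue by induction on $n\ge 0$ (for negative $n$ one uses $(\alpha\pm\beta\sqrt{d})^{-1}=\mp(\alpha\mp\beta\sqrt{d})$, which holds because $N(y)=-1$ in the standing case; this only changes signs and swaps the two conjugates, so parities are unaffected). The standing assumption of this section is that $\alpha$ and $\beta$ are both odd, and $d\equiv 2\pmod 4$, so in particular $d$ is even. The claim to prove is the sharper statement that, for every $n\ge0$,
\[
a^{\pm}_n\equiv 1\pmod 2 \quad\text{for all } n,\qquad b^{\pm}_n\equiv n\pmod 2 .
\]
Carrying the parity of $a_n^{\pm}$ along is essential, since the recursion for $b_n^{\pm}$ involves $a_n^{\pm}$.

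The base case $n=0$ gives $a_0=1$, $b_0=0$, and $n=1$ gives $a_1=\alpha$, $b_1=\pm\beta$, both odd. For the inductive step, multiplying $a_n+b_n\sqrt d$ by $\alpha\pm\beta\sqrt d$ yields
\[
a_{n+1}=\alpha a_n\pm d\beta b_n,\qquad b_{n+1}=\pm\beta a_n+\alpha b_n .
\]
Reducing modulo $2$ with $d$ even and $\alpha,\beta$ odd gives $a_{n+1}\equiv a_n\equiv1$ and $b_{n+1}\equiv a_n+b_n\equiv 1+n\pmod 2$. This closes the induction. The ``if and only if'' then follows at once: $b_n^{\pm}$ is odd exactly when $n$ is odd, and even exactly when $n$ is even.

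There is no real obstacle here. The only point needing care is to notice that one must track the parity of $a_n$ simultaneously; the fact that $d$ is even kills the cross term, and that is the key observation. An alternative closed-form route would use the binomial expansion $b_n=\sum_{r\text{ odd}}\binom{n}{r}\alpha^{n-r}(\pm\beta)^r d^{(r-1)/2}$. Modulo $2$, only the $r=1$ term survives, because $d$ is even, leaving $n\alpha^{n-1}\beta\equiv n$. This serves as a one-line check of the same result.
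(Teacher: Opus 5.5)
Your proof is correct, but your main argument takes a different route from the paper's. The paper's proof is exactly the closed-form check you give at the end. It expands $(\alpha\pm\beta\sqrt d)^n$ binomially and isolates the $r=1$ contribution $n\alpha^{n-1}\beta$ to the $\sqrt d$-coefficient. Every term with $r\ge 3$ carries the factor $d^{(r-1)/2}$, which is even, so it vanishes mod $2$.

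Your primary argument instead inducts on the recurrence $a_{n+1}=\alpha a_n\pm d\beta b_n$, $b_{n+1}=\pm\beta a_n+\alpha b_n$. This forces you to carry the auxiliary invariant $a_n^{\pm}\equiv 1\pmod 2$.

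The binomial route is a single computation with no extra invariant. Your induction yields the parity of $a_n^{\pm}$ as a by-product. It also adapts directly to Lemma~\ref{unit-root3-coef}, which the paper leaves unproved. There ($\alpha$ even, $\beta$ odd, $d$ odd) the recurrence reduces to $a_{n+1}\equiv b_n$, $b_{n+1}\equiv a_n \pmod 2$, and tracking both coefficients is genuinely needed.

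Your remark on negative exponents goes beyond the paper, whose expansion tacitly assumes $n\ge 0$. The inverse there should read $(\alpha\pm\beta\sqrt d)^{-1}=-(\alpha\mp\beta\sqrt d)$, since $N(y)=-1$, rather than $\mp(\alpha\mp\beta\sqrt d)$. This sign slip is harmless, because only parities matter.
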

\begin{proof}
	I always have $m\cong -m~(mod~2)$. Here I can see that
	\begin{align*}
		b_n^{\pm}\sqrt{d}&=(\pm1)^n\Big[\sum_{\substack{r=0\\r~odd}}^{n}\binom{n}{r}(\beta \sqrt{d})^r(\pm \alpha )^{n-r}\Big]\\
		&=(\pm1)^n\Big[n(\pm \alpha)^{n-1}\beta+\sum_{\substack{r=3\\r~odd}}^{n}\binom{n}{r}\beta^rd^{(\frac{r-1}{2})}(\pm \alpha)^{n-r}\Big]\sqrt{d}.
	\end{align*}
	So I have \[b_n^{\pm}\cong (\pm1)^n\Big[n(\pm \alpha)^{n-1}\beta+\sum_{\substack{r=3\\r~odd}}^{n}\binom{n}{r}\beta^rd^{(\frac{r-1}{2})}(\pm \alpha)^{n-r}\Big]\cong n~(mod ~2).\]
\end{proof}

\begin{theorem}[Theorem \ref{theorem1}]
	\label{theorem2d2}
	The solution set $S$ of the equation
	\[
	x_1^{3}-x_2^{2}x_1+1=0
	\]
	in $\mathbb{Q}(\sqrt{d})\times \mathbb{Z}[\sqrt{d}]$ is $\{(-1,0),(1,\sqrt{2})\}$ for $d=2$ and $\{(-1,0)\}$ for $d\neq 2$ .	
\end{theorem}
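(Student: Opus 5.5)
By Lemma~\ref{lemma-unit}, any solution $(u,v)\in\mathbb{Q}(\sqrt{d})\times\mathbb{Z}[\sqrt{d}]$ has $u$ a unit of $\mathbb{Z}[\sqrt{d}]$. Under the standing assumption of this section ($y=\alpha+\beta\sqrt{d}$ with $\alpha,\beta$ odd, so $N(y)=-1$), every unit has the form $u=\pm y^{n}$ with $n\in\mathbb{Z}$. The equation is equivalent to
\[
v^{2}=u^{2}+u^{-1}.
\]
The plan is to run through the possibilities for $n$ and for the sign. For each one we either find the solution or reach a contradiction, using the lemmas above together with the Galois conjugation $z\mapsto\overline{z}$. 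Conjugation is a ring automorphism of $\mathbb{Z}[\sqrt{d}]$, so it carries solutions to solutions.

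\textbf{Step 1 (the case $n=0$).} If $u=1$ then $v^{2}=2$. This forces $d=2$ and $v=\pm\sqrt{2}$, which gives $(1,\sqrt{2})$; the sign of $v$ is immaterial because only $v^2$ occurs. If $u=-1$ then $v=0$, which gives $(-1,0)$.

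\textbf{Step 2 (reduction to $n$ even).} Write $v=p+q\sqrt{d}$. Then the $\sqrt{d}$-coefficient of $v^{2}=p^{2}+dq^{2}+2pq\sqrt{d}$ is even. Now use $y^{-1}=-\overline{y}=-(\alpha-\beta\sqrt{d})$, which gives
\[
u^{-1}=\pm(-1)^{n}(\alpha-\beta\sqrt{d})^{n}.
\]
By Lemma~\ref{unit-root2-coefd2}:
\begin{itemize}
\item the $\sqrt{d}$-coefficient of $u^{2}=y^{2n}$ is even;
\item the $\sqrt{d}$-coefficient of $u^{-1}$ has the parity of $n$.
\end{itemize}
For negative $n$, apply the lemma to $(\alpha\mp\beta\sqrt{d})^{|n|}$ after rewriting $y^{n}=(-1)^{n}\overline{y}^{\,|n|}$. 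Hence if $n$ is odd, the $\sqrt{d}$-coefficient of $u^{2}+u^{-1}$ is odd, which contradicts the previous observation. So $n=2s$ with $s\neq 0$.

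\textbf{Step 3 (reduction to $s>0$).} Suppose $s<0$. Since $\overline{y}^{\,2}=y^{-2}$, we have $\overline{u}=\pm y^{-2s}=\pm y^{2|s|}$. Since $(\overline{u},\overline{v})$ is again a solution, we may assume $s>0$.

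\textbf{Step 4 (the two signs with $s>0$).}
\begin{itemize}
\item If $u=y^{2s}$, then $v^{2}=y^{4s}+y^{-2s}$. Replace $v$ by $-v$ if needed so that $m:=\pm v>0$; this is a real element of $\mathbb{Z}[\sqrt{d}]$. Then Lemma~\ref{solution-equation-case1d2} rules this out, provided $m\neq\sqrt{2}$. That proviso is harmless: $m=\sqrt{2}$ would force $y^{4s}+y^{-2s}=2$, which is impossible since $y^{4s}>1$ and $y^{-2s}>0$ give a sum exceeding $2$.
\item If $u=-y^{2s}$, then $v^{2}=y^{4s}-y^{-2s}$. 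Conjugating, and using $\overline{y}=\alpha-\beta\sqrt{d}$, gives
\[
\overline{v}^{\,2}=(\alpha-\beta\sqrt{d})^{4s}-(\alpha-\beta\sqrt{d})^{-2s}.
\]
The right-hand side is negative, by the argument of Lemma~\ref{solution-equation-case2d2}. But $\overline{v}$ is real, so $\overline{v}^{\,2}\ge 0$, which is a contradiction. (Here $\overline{v}\neq 0$; otherwise $u^{3}=-1$ and $u=-1$, the case $n=0$ already handled.)
\end{itemize}

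This exhausts all cases. The solution set is therefore $\{(-1,0),(1,\sqrt{2})\}$ when $d=2$ and $\{(-1,0)\}$ otherwise.

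\textbf{Main obstacle.} I expect the main difficulty to be the bookkeeping of signs and conjugations. Three points need care:
\begin{itemize}
\item translating negative exponents and the sign $(-1)^n$ coming from $N(y)=-1$ into the exact shape required by Lemmas~\ref{solution-equation-case1d2} and~\ref{solution-equation-case2d2};
\item the parity argument of Step 2, which must be checked carefully for both signs of $u$ and for negative $n$;
\item the genuinely arithmetic content, which sits inside Lemma~\ref{solution-equation-case1d2}; I take that lemma as given.
\end{itemize}
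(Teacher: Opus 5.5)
Your proof is correct and follows the same route as the paper: $x_1$ is a unit; the parity of the $\sqrt{d}$-coefficient (Lemma~\ref{unit-root2-coefd2}) forces an even exponent; conjugation halves the cases; and Lemma~\ref{solution-equation-case1d2} together with the sign argument of Lemma~\ref{solution-equation-case2d2} rules out $u=y^{2s}$ and $u=-y^{2s}$ (your explicit treatment of $n=0$ and of negative exponents is cleaner than the paper's). Two small repairs are needed: $y^{4s}+y^{-2s}>2$ does not follow from $y^{4s}>1$ and $y^{-2s}>0$, so use $y^{4s}\ge y^{4}\ge(1+\sqrt{2})^{4}>2$ instead; and since you note that the sign of $v$ is free, you should conclude that for $d=2$ the solution set is $\{(-1,0),(1,\sqrt{2}),(1,-\sqrt{2})\}$, which is a defect of the statement itself that the paper shares.
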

\begin{proof}
	One can check that $(-1,0)$ is a solution of the equation and also $(1,\sqrt{2})$ is a solution for $d=2$. Let $(x,m)$ be any other solution of the given equation. Then $x\neq 0$ and $m\neq 0,\sqrt{2}$.
	
	From the Lemma \ref{lemma-unit}, I have  $x$ is a unit in $\mathbb{Z}[\sqrt{d}]$.
	
	Here
	\[
	y=\alpha+\beta\sqrt{d} \quad \text{and suppose} \quad z=\alpha-\beta\sqrt{d}.
	\]
	I know that all units of $\mathbb{Z}[\sqrt{d}]$ are of the form (see \ref{units-theory2})
	\[
	\pm y^n,\ \pm z^n,
	\]
	where $n$ is a non-negative integer.
	
	As $m\neq 0,\sqrt{2}$, I must have $n>0$.
	Therefore, I have the following four possibilities:
	\begin{align}
		y^{2n}+y^{-n} &= m_1^2, \label{point2case1d2}\\
		z^{2n}-z^{-n} &= m_2^2, \label{point2case2d2}\\
		y^{2n}-y^{-n} &= m_3^2, \label{point2case3d2}\\
		z^{2n}+z^{-n} &= m_4^2. \label{point2case4d2}
	\end{align}
	Assuming  $y^k=a^+_k+b^{+}_k\sqrt{2} $ and $z^k=a^-_k+b^{-}_k\sqrt{2} $ for integer $k$, and comparing the coefficients of $\sqrt{d}$ in the above four cases I have
	\begin{align*}
		b^{+}_{2n}\pm (-1)^nb^{+}_{n}&\cong 0~(mod~2)\text{ and }\\
		b^{-}_{2n}\pm (-1)^nb^{-}_{n}&\cong 0~(mod~2)
	\end{align*} But from lemma \ref{unit-root2-coefd2}, I have $b^{\pm}_{2n}$ is even and $ b^{\pm}_{n}\cong n (mod~2)$. So I have $n\cong 0(mod~2)$, that is $n$ is even.
	
	Taking conjugates, equation~\eqref{point2case1d2} and equation~\eqref{point2case2d2} become equation~\eqref{point2case4d2} and equation~\eqref{point2case3d2}, respectively. Therefore  equation~\eqref{point2case1d2} and equation~\eqref{point2case2d2} have solution if and only if  equation~\eqref{point2case4d2} and equation~\eqref{point2case3d2}, respectively have solution.
	Now from lemma \ref{solution-equation-case1d2} and lemma \ref{solution-equation-case2d2}  equation~\eqref{point2case1d2} and equation~\eqref{point2case2d2} have no solution.Therefore the above four possibilities have no solutions.

\end{proof}

\section{Proof of Theorem \ref{theorem1} for $\mathbb{K}=\mathbb{Q}[\sqrt{d}]$ with $d\cong 3\pmod{4}$}\label{sec:mainsection3}
Suppose $d\cong 3\pmod{4}$. For the real quadratic number field $\mathbb{K}=\mathbb{Q}[\sqrt{d}]$ I have $\mathcal{O}_\mathbb{K}=\mathbb{Z}[\sqrt{d}]$. So the set all units in $\mathcal{O}_\mathbb{K}$ is  $\{\pm y^n~:~n\in\mathbb{Z}\}$, where $y$ is the minimum unit greater than $1$. Let $y=\alpha+\beta \sqrt{d}$ for some integer $\alpha,\beta$. Here I know that $x$ is unit if and only if $N(x)=\pm1$. Now I have the following
\[
N(y)\pmod{4}=\alpha^2-d\beta^2=\left\{\begin{matrix}
	1-3=2&\text{ if }&\alpha \text{ is odd and }\beta \text{ is odd},\\
	1-0=~1&\text{ if }&\alpha \text{ is odd and }\beta \text{ is even},\\
	0-3=~1&\text{ if }&\alpha \text{ is even and }\beta \text{ is odd},\\
	0-0=~0&\text{ if }&\alpha \text{ is even and }\beta \text{ is even},\\
\end{matrix}\right.
\]
But $N(y)$ is allowed to be $\pm1$. Therefore I have following two cases:
\begin{enumerate}
	\item Suppose $\alpha$ is even and $\beta$  is odd.  As an example, $y=2+\sqrt{3}$ is fundamental unit in $\mathbb{Z}[\sqrt{3}]$.
	\item Suppose $\alpha$ is odd and $\beta$  is even. As an example, $y=25+4\sqrt{39}$ is fundamental unit in $\mathbb{Z}[\sqrt{39}]$.
\end{enumerate}
Here I have $N(y)=1$. Conjugate of $y$ is $\overline{y}=\alpha-\beta\sqrt{d}=\frac{1}{y}$.

Throughout the rest part of this section suppose $y=\alpha+\beta\sqrt{d}$ for odd $\beta$ and even $\alpha$.

I first establish some auxiliary lemmas.  These results will be used in the proof of the main theorem, which follows thereafter.
\begin{lemma}
	\label{solution-equation-case13}
	The following equation 
	\[
	(\alpha+\beta\sqrt{d})^{4s}+(\alpha+\beta\sqrt{d})^{-2s}=m^2
	\]
	where positive $m\in\mathbb{Z}[\sqrt{d}]$ has no solution for natural number $s$.
\end{lemma}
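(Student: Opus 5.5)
The plan is to follow the proof of Lemma~\ref{solution-equation-case1d2}: factor the equation as a difference of squares, show that both factors are units, and then get a contradiction from norms. Here $N(y)=1$, which makes the last step cleaner than in the case $d\equiv 2\pmod 4$.

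Suppose $y^{4s}+y^{-2s}=m^2$ with $s\geq 1$ and $m>0$ in $\mathbb{Z}[\sqrt{d}]$. Since $y^{-2s}>0$, we get $m^2>y^{4s}$, and since $m>0$ this gives $m>y^{2s}$. Rewrite the equation as
\[
(m+y^{2s})(m-y^{2s})=y^{-2s}.
\]
Both factors lie in $\mathbb{Z}[\sqrt{d}]$, and $N(y^{-2s})=1$. Since norms of elements of $\mathbb{Z}[\sqrt{d}]$ are rational integers, $N(m+y^{2s})=\pm1$, so $m+y^{2s}$ is a unit. It is positive and exceeds $y^{2s}>1$, and every unit of $\mathbb{Z}[\sqrt{d}]$ has the form $\pm y^n$. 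Hence
\[
m+y^{2s}=y^k \text{ for some integer } k>2s, \qquad m-y^{2s}=y^{-(2s+k)}.
\]

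Subtracting the two relations and multiplying by $y^{s}$ gives
\[
2y^{3s}=y^{s+k}-y^{-(s+k)}.
\]
Because $N(y)=1$, we have $y^{-1}=\overline{y}$. Writing $y^{s+k}=a+b\sqrt{d}$ with $a,b\in\mathbb{Z}$, we get $y^{s+k}-\overline{y}^{\,s+k}=2b\sqrt{d}$, and therefore $y^{3s}=b\sqrt{d}$.

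Taking norms gives a contradiction. On one side, $N(y^{3s})=N(y)^{3s}=1$. On the other, $N(b\sqrt{d})=-db^2\leq 0$. Equivalently, writing $y^{3s}=a_{3s}+b_{3s}\sqrt{d}$, we would have $a_{3s}=0$, so $-db_{3s}^2=1$, which is impossible. Hence there is no solution.

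The delicate point is the unit step: we must make sure that $m+y^{2s}$ is a unit with the sign $+$ and a positive exponent. Positivity of $m$ guarantees this. Once $y^{-1}=\overline{y}$ is in place, the rest is formal, and the parity hypotheses on $\alpha,\beta$ are not even needed for this lemma.
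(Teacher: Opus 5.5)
Your proof is correct and follows the same route as the paper's: the difference-of-squares factorization, the identification $m+y^{2s}=y^k$ as a unit, and the identity $2y^{3s}=y^{s+k}-\overline{y}^{\,s+k}$, which forces $y^{3s}$ to be a pure multiple of $\sqrt d$. Your norm computation $N(b\sqrt d)=-db^2\neq 1$ supplies the justification for the final contradiction, which the paper only asserts (it claims without proof that $y^{3s}$ must have nonzero rational part).
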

\begin{proof}
	Here $y=\alpha+\beta\sqrt{d}>1$.  If possible let there is a solution for positive integer $s$. I have the given equation
	\[
	y^{4s}+y^{-2s}=m^2\,\text{ for positive }m\in\mathbb{Z}[\sqrt{d}].
	\]
	Therefore I have $m>y^{2s}$. Calculating norm I get 
	\begin{align*}
		N(m+y^{2s})N(m-y^{2s})&=N(m^2-y^{4s})\\
		&=N(y^{-2s})=\pm1
	\end{align*}
	So $N(m+y^{2s})=\pm1$. Therefore $m+y^{2s}$ is unit in $\mathbb{Z}[\sqrt{d}]$. Here $m+y^{2s}>y>1$. So from Theorem \ref{units-theory3}, I got $m+y^{2s}=y^k$ for some natural number $k$. Therefore $m-y^{2s}=y^{-(2s+k)}$. Now I have the expression of $y^{3s}$ as follows;
	\begin{align*}
		y^{3s}&=y^{s}y^{2s}=\frac{y^{s}}{2}[(m+y^{2s})-(m-y^{2s})]\\
		&=\frac{1}{2} [y^{s+k}-y^{-(s+k)}]\\
		&=\frac{1}{2} [(\alpha+\beta\sqrt{d})^{s+k}-(\alpha-\beta\sqrt{d})^{s+k}]\\
		&=\frac{1}{2}\Big[ \sum_{r=0}^{s+k} \binom{s+k}{r}\alpha^r(\beta\sqrt{d})^{s+k-r}\big(1-(-1)^{s+k-r}\big)\Big]\\
		&=\frac{1}{2}\Big[ \sum_{t=0}^{s+k} \binom{s+k}{t}\alpha^{s+k-t}(\beta\sqrt{d})^t\big(1-(-1)^{t}\big)\Big]\\
		&=\left\{ \begin{matrix}
			\sum_{t~odd}^{s+k} \binom{s+k}{t}\alpha^{s+k-t}(\beta\sqrt{d})^t&\text{ if }&s+k \text{ is odd}\\~\\
			\sum_{t~odd}^{s+k-1} \binom{s+k}{r}\alpha^{s+k-t}(\beta\sqrt{d})^t&\text{ if }&s+k \text{ is even}
		\end{matrix}   \right.\\~\\
		&=\left\{ \begin{matrix}
		0+\Big[\sum_{r=0}^{\frac{1}{2}(s+k-1)} \binom{s+k}{2r+1}\alpha^{s+k-2r-1}\beta^{2r+1}d^r\Big]\sqrt{d}&\text{ if }&s+k \text{ is odd}\\~\\
		0+\Big[\sum_{r=0}^{\frac{1}{2}(s+k-2)} \binom{s+k}{2r+1}\alpha^{s+k-2r-1}\beta^{2r+1}d^r\Big]\sqrt{d}&\text{ if }&s+k \text{ is even}
		\end{matrix}   \right.\\~\\
	\end{align*}
	Therefore I got a contradiction as $(\alpha+\beta\sqrt{d})^{3s}=a+b\sqrt{d}$ implies $a\neq 0\neq b$.
\end{proof}

\begin{lemma}
	\label{solution-equation-case23}
	The following equation 
	\[
	(\alpha-\beta\sqrt{d})^{4s}-(\alpha-\beta\sqrt{d})^{-2s}=m^2
	\]
	where positive $m\in\mathbb{Z}[\sqrt{d}]$ has no solution for natural number $s$.
\end{lemma}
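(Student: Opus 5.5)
The plan is to argue by sign, as in Lemma \ref{solution-equation-case2d2}, with one change: here the conjugate unit is positive rather than negative. Put $z=\alpha-\beta\sqrt{d}$. In this section $N(y)=1$, so $z=\overline{y}=\frac{1}{y}$. Since $y=\alpha+\beta\sqrt{d}>1$, this gives $0<z<1$ as a real number under the fixed real embedding of $\mathbb{Q}(\sqrt{d})$ in which $m$ is assumed positive.

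First I will compare the two terms on the left-hand side for a natural number $s\geq 1$. From $0<z<1$ we get $0<z^{4s}<1$. We also get $z^{-2s}=y^{2s}>1$, because $y>1$ and $2s\geq 2$. Hence
\[
z^{4s}-z^{-2s}<1-1=0 .
\]
So the left-hand side is a strictly negative real number.

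Next, $m\in\mathbb{Z}[\sqrt{d}]$ is a positive real number, so $m^2>0$. Even if one allowed $m=0$, we would need $z^{4s}=z^{-2s}$, that is $z^{6s}=1$, which is impossible for $0<z<1$ and $s\geq1$. Either way, equality with $m^2$ is impossible, and the equation has no solution.

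I expect no real obstacle. The only point needing care is to state explicitly that positivity of $m$ and the inequalities on $z$ refer to the same real embedding. One must also note that $N(y)=+1$ here, not $-1$ as in the $d\equiv 2\pmod 4$ case. This is what makes $z$ lie in $(0,1)$ rather than $(-1,0)$, and the sign argument works without any parity considerations on $s$.
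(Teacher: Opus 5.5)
Your proof is correct and is essentially the paper's argument: since $N(y)=1$ gives $0<\alpha-\beta\sqrt{d}<1$, the left-hand side is a negative real number while $m^2>0$. Your direct comparison $z^{4s}<1<z^{-2s}$ states the needed inequality a little more explicitly than the paper's chain $0<z^{4s}<z^{2s}<1$.
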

\begin{proof}
	As $0<\alpha-\beta\sqrt{d}<1$, I have $0<(\alpha-\beta\sqrt{d})^{4s}<(\alpha-\beta\sqrt{d})^{2s}<1$. So $	(\alpha-\beta\sqrt{d})^{4s}-(\alpha-\beta\sqrt{d})^{-2s}$ is negative real number but $m^2$ is positive number. 
\end{proof}

Now I am ready to proof the Theorem \ref{theorem1}. I restate Theorem~\ref{theorem1} for convenience and then process the proof.

\begin{lemma}\label{unit-root3-coef}
	If $(\alpha\pm \beta\sqrt{d})^{n}=a^{\pm}_n+b^{\pm}_n\sqrt{d},$ then $a^{\pm}_n$ is odd and $b^{\pm}_n$ is even for even $n$ and $a^{\pm}_n$ is even and $b^{\pm}_n$ is odd for odd $n$. Therefore $b^{\pm}_n\cong n\pmod{2}$.
\end{lemma}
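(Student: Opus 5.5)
Assume, as in this section, that $d\equiv 3\pmod 4$ and $y=\alpha+\beta\sqrt{d}$ with $\alpha$ even and $\beta$ odd. I will argue by induction on $n$, reducing everything modulo $2$. The cleanest way is to observe that $(\alpha\pm\beta\sqrt{d})^{n}\equiv(\pm\beta\sqrt{d})^{n}\pmod{2\mathbb{Z}[\sqrt{d}]}$. This follows from the binomial expansion, since every term with a positive power of $\alpha$ is divisible by $2$. Here congruence of $a+b\sqrt{d}$ and $a'+b'\sqrt{d}$ modulo $2$ means $a\equiv a'$ and $b\equiv b'\pmod 2$. This makes sense because $\{1,\sqrt{d}\}$ is a $\mathbb{Z}$-basis of $\mathbb{Z}[\sqrt{d}]$, so $2\mathbb{Z}[\sqrt{d}]=2\mathbb{Z}+2\mathbb{Z}\sqrt{d}$.

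Next I compute $(\pm\beta\sqrt{d})^{n}$ explicitly.
\begin{itemize}
\item For even $n=2k$, it equals $\beta^{2k}d^{k}$. This is a rational integer and it is odd, since $\beta$ and $d$ are both odd.
\item For odd $n=2k+1$, it equals $\pm\beta^{2k+1}d^{k}\sqrt{d}$. Its rational part is $0$ and its $\sqrt{d}$-coefficient is odd.
\end{itemize}

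Comparing coefficients in the basis $\{1,\sqrt{d}\}$ then gives the claim. For even $n$, $a^{\pm}_n$ is odd and $b^{\pm}_n$ is even. For odd $n$, $a^{\pm}_n$ is even and $b^{\pm}_n$ is odd. In particular $b^{\pm}_n\equiv n\pmod 2$.

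Alternatively, one could run an induction using
\[
a_{n+1}=\alpha a_n\pm d\beta b_n,\qquad b_{n+1}=\alpha b_n\pm \beta a_n .
\]
Reducing these mod $2$ gives $a_{n+1}\equiv b_n$ and $b_{n+1}\equiv a_n$, since $\alpha$ is even and $\beta,d$ are odd. So the parities simply swap at each step, starting from $(a_0,b_0)=(1,0)$. This is essentially the same argument.

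There is no real obstacle here. The only point needing care is the hypothesis: the statement relies on the standing assumption of this section that $\alpha$ is even and $\beta$ is odd. Without it, in case (2) of the preceding discussion where $\beta$ is even, the conclusion fails, because then $b_n$ is always even. I will state explicitly that this assumption is in force. I will also state that negative exponents are not needed, since the lemma concerns $n\ge 0$.
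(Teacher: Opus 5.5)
Your proposal is correct and is essentially the paper's argument. The paper just says that binomial expansion and comparison of coefficients suffice, and your reduction $(\alpha\pm\beta\sqrt{d})^{n}\equiv(\pm\beta\sqrt{d})^{n}\pmod{2\mathbb{Z}[\sqrt{d}]}$ (using $\alpha$ even and $\beta,d$ odd) carries out exactly that computation, with the useful clarification that the section's standing hypothesis ($\alpha$ even, $\beta$ odd) is needed.
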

Binomial expansion and comparing coefficients are enough to get the above statement.

\begin{theorem}[Theorem \ref{theorem1}]
	\label{theorem3}
	The solution set of the equation
	\[
	x_1^{3}-x_2^{2}x_1+1=0
	\]
	is $\{(-1,0)\}$ in $\mathbb{Q}(\sqrt{d})\times \mathbb{Z}[\sqrt{d}]$.	
\end{theorem}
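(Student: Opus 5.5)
Let $(x,m)$ be a solution with $m\neq 0$; I will derive a contradiction. First I check that $(-1,0)$ satisfies the equation. By Lemma \ref{lemma-unit}, $x$ is a unit of $\mathbb{Z}[\sqrt{d}]$.

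\textbf{Reduction to four equations.} Here $N(y)=1$ and $\overline{y}=y^{-1}$. Put $z=\alpha-\beta\sqrt{d}$. Then every unit has the form $\pm y^n$ with $n\in\mathbb{Z}$, equivalently $\pm y^n$ or $\pm z^n$ with $n\ge 0$. Since $m^2=x^2+x^{-1}$, the four possibilities are
\[
y^{2n}+y^{-n}=m^2,\quad z^{2n}-z^{-n}=m^2,\quad y^{2n}-y^{-n}=m^2,\quad z^{2n}+z^{-n}=m^2 .
\]
The case $n=0$ gives $m^2=2$ or $m^2=0$. The equation $m^2=2$ would force $d=2$, which is excluded since $d\equiv 3\pmod 4$, and $m^2=0$ gives $m=0$. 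Hence $n>0$.

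\textbf{Parity step.} Write $m=a+b\sqrt{d}$, so that $m^2$ has $\sqrt{d}$-coefficient $2ab$, which is even. Compare $\sqrt{d}$-coefficients in each equation. Since $y^{-n}=z^n$ and $z^{-n}=y^n$, the $\sqrt{d}$-coefficient of $y^{-n}$ is $b^-_n=-b^+_n$. This gives $b^{\pm}_{2n}\pm b^{\pm}_n\equiv 0\pmod 2$. Lemma \ref{unit-root3-coef} gives $b^{\pm}_{2n}\equiv 0$ and $b^{\pm}_n\equiv n\pmod 2$. Therefore $n$ is even, say $n=2s$, and the equations become $y^{4s}\pm y^{-2s}=m^2$ and $z^{4s}\pm z^{-2s}=m^2$.

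\textbf{Conjugation.} Conjugation swaps $y$ and $z$ and sends $m^2$ to $\overline{m}^2$. So the $y$-equation with $+$ is equivalent to the $z$-equation with $+$, and the $z$-equation with $-$ is equivalent to the $y$-equation with $-$. Replacing $m$ by $-m$ if needed, we may take $m>0$.

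\textbf{Elimination.} Lemma \ref{solution-equation-case13} rules out $y^{4s}+y^{-2s}=m^2$. Lemma \ref{solution-equation-case23} rules out $z^{4s}-z^{-2s}=m^2$, because $0<z<1$ makes the left side negative. By the conjugation step this disposes of all four cases, so only $(-1,0)$ remains.

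The main obstacle is Lemma \ref{solution-equation-case13}. Its proof shows that $y^{3s}$ would be a pure multiple of $\sqrt{d}$. To make this a genuine contradiction I would argue as follows. Since $\overline{y}=y^{-1}$, if $y^{3s}=c\sqrt{d}$ then $y^{-3s}=-c\sqrt{d}$. Multiplying gives $1=-c^2d$, which is impossible because $d>0$. The same norm argument should be checked against the parity claims, since $a^{+}_{3s}$ is odd exactly when $3s$ is even.

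Two further points need care. First, the section assumes $\alpha$ even and $\beta$ odd, which is condition C; the theorem should be stated under that hypothesis. Second, the reduction to $n$ even needs a check. The coefficient of $\sqrt{d}$ in $m^2$ is $2ab$, and this is consistent with every case, so the parity argument is valid as written.
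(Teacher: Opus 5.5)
Your proposal is correct and follows the paper's proof essentially step for step. Lemma~\ref{lemma-unit} makes $x$ a unit, which gives the four equations. The parity argument via Lemma~\ref{unit-root3-coef} forces $n$ even. Conjugation pairs the cases, and Lemmas~\ref{solution-equation-case13} and~\ref{solution-equation-case23} eliminate them. Your two additions repair points the paper leaves terse or gets wrong: you treat $x=1$ through $m^2=2$ (the paper wrongly says $\pm1$ forces $m=0$), and you close Lemma~\ref{solution-equation-case13} with the norm identity $1=N(y^{3s})=-c^2d$.
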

\begin{proof}
	One can check that $(-1,0)$ satisfy the equation. Let $(x,m)$ be any other solution of the given equation. Then $x\neq 0$ and $m\neq 0$.
	
	From the Lemma \ref{lemma-unit}, I have  $x$ is a unit in $\mathbb{Z}[\sqrt{d}]$.
	
	Here I know
	\[
	y=\alpha+\beta\sqrt{d} \quad \text{and suppose} \quad z=\alpha-\beta\sqrt{d}.
	\]
	I know that all units of $\mathbb{Z}[\sqrt{3}]$ are of the form
	\[
	\pm y^n,\ \pm z^n,
	\]
	where $n$ is a non-negative integer. Here I can see that $y^{-1}=z$. For $n=0$, I obtain the units $\pm 1$, which imply that $m=0$.
	As $m\neq 0$, I must have $n>0$.
	Therefore, I have the following four possibilities:
	\begin{align}
		y^{2n}+y^{-n} &= m_1^2, \label{point2case13}\\
		z^{2n}-z^{-n} &= m_2^2, \label{point2case23}\\
		y^{2n}-y^{-n} &= m_3^2, \label{point2case33}\\
		z^{2n}+z^{-n} &= m_4^2. \label{point2case43}
	\end{align}
	Assuming  $y^k=a^+_k+b^{+}_k\sqrt{d} $ and $z^k=a^-_k+b^{-}_k\sqrt{d} $ for integer $k$, and comparing the coefficients of $\sqrt{d}$ in the above four cases I have
	\begin{align*}
		b^{+}_{2n}\pm b^{-}_{n}&\cong 0~(mod~2)\text{ and }\\
		b^{-}_{2n}\pm b^{+}_{n}&\cong 0~(mod~2)
	\end{align*} But from lemma \ref{unit-root3-coef}, I have $b^{\pm}_{2n}$ is even and $ b^{\pm}_{n}\cong n (mod~2)$. So I have $n\cong 0(mod~2)$, that is $n$ is even.
	
	Taking conjugates, equation~\eqref{point2case13} and equation~\eqref{point2case23} become equation~\eqref{point2case43} and equation~\eqref{point2case33}, respectively. Therefore  equation~\eqref{point2case13} and equation~\eqref{point2case23} have solution if and only if  equation~\eqref{point2case43} and equation~\eqref{point2case33}, respectively have solution.
	Now from lemma \ref{solution-equation-case13} and lemma \ref{solution-equation-case23}  equation~\eqref{point2case13} and equation~\eqref{point2case23} have no solution.Therefore the above four possibilities have no solutions.
\end{proof}

\section{Conclusion}

In this paper, I completely determine the solution set of the Diophantine equation
\[
x_1^{3}-x_2^{2}x_1+1=0
\]
over the number field $\mathbb{K}$.

As an application, I consider the family of elliptic curves
\[
C_m:Y^2=X^3-m^2X+1,
\qquad
m\in\mathcal{O}_\mathbb{K},
\]
and deduce that, for every $m\neq0,\sqrt{2}$, the Mordell--Weil group $C_m\bigl(\mathbb{K}\bigr)$ 
contains no rational point of order two. Although our proof is entirely elementary, this result complements the extensive study of torsion subgroups of elliptic curves over quadratic fields initiated by Kamienny \cite{Kamienny1992} and Kenku--Momose \cite{KenkuMomose1988}, and is consistent with the general philosophy that arithmetic properties of defining equations can provide useful information about the Mordell--Weil group. Related approaches to torsion over number fields can also be found in \cite{BruinNajman2016}, while general background on the arithmetic of elliptic curves is available in \cite{SilvermanTate2015}. Our family is also related to earlier investigations of parametrized families of elliptic curves, such as those considered in \cite{Antoniewicz2005}.

The methods developed in this paper suggest several natural directions for further investigation. One may study analogous Diophantine equations over the rings of integers of other real quadratic fields, replace the constant term by a general integer parameter, or consider more general families of cubic equations whose solvability influences the arithmetic of associated elliptic curves. I hope that the elementary techniques employed here will be useful in these broader settings.
Another is the study of torsion points of higher orders of the family $C_m$.

\appendix
\section{Some properties about  $\mathbb{Z}[\sqrt{2}]$ }
Let $d$ is squarefree positive integer greater than 1. Suppose $N:\mathbb{Q}[\sqrt{d}]\longrightarrow\mathbb{Q}$ is a function such that
\[
N(a+b\sqrt{2})=a^2-db^2.
\]
Then $N$ is norm on $\mathbb{Q}[\sqrt{d}]$ and $N$ is multiplicative and $N(\mathbb{Z}[\sqrt{d}])\subseteq\mathbb{Z}$.
Let $v:\mathbb{Z}[\sqrt{d}]\to\mathbb{Z}$ such that $v(a+b\sqrt{d})=\mid N(a+b\sqrt{d})\mid $.

Suppose $d=2$ or $3$ for the rest part.
\begin{theorem}
	{\bf\color{blue} (Remove it before publish)} The rings $\mathbb{Z}[\sqrt{2}]$ and $\mathbb{Z}[\sqrt{3}]$ are Euclidean
	domains with respect to the absolute value of the norm. 
\end{theorem}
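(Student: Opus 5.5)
The plan is the standard ``approximate the quotient by the nearest lattice point'' argument, using the norm $N(a+b\sqrt{d})=a^2-db^2$ and $v=|N|$ introduced just above. I first record the easy properties of $v$. Since $N$ is multiplicative and takes integer values on $\mathbb{Z}[\sqrt{d}]$, $v$ maps $\mathbb{Z}[\sqrt{d}]$ into $\mathbb{Z}_{\ge 0}$. Moreover, $v(x)=0$ only for $x=0$, because $d$ is not a square. Consequently $v(ab)=v(a)v(b)\ge v(b)$ for nonzero $a,b$, which is the monotonicity axiom, in case the definition of Euclidean domain in use requires it.

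The main step is the division algorithm. Let $a,b\in\mathbb{Z}[\sqrt{d}]$ with $b\neq 0$. I work in $\mathbb{Q}(\sqrt{d})$ and write
\[
\frac{a}{b}=\frac{a\,\overline{b}}{N(b)}=r+s\sqrt{d},\qquad r,s\in\mathbb{Q}.
\]
I then choose integers $m,n$ with $|r-m|\le \tfrac12$ and $|s-n|\le\tfrac12$, and set $q=m+n\sqrt{d}\in\mathbb{Z}[\sqrt{d}]$ and $\rho=a-bq\in\mathbb{Z}[\sqrt{d}]$. If $\rho=0$ we are done. Otherwise put $x=r-m$ and $y=s-n$, so that $\rho=b\,(x+y\sqrt{d})$ and, by multiplicativity of $N$ (extended to $\mathbb{Q}(\sqrt{d})$),
\[
v(\rho)=v(b)\,\bigl|x^2-dy^2\bigr|.
\]

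The only point needing care, and the place where $d\in\{2,3\}$ is used, is the estimate $|x^2-dy^2|<1$. Since $x^2$ and $dy^2$ are both nonnegative, their difference is bounded in absolute value by the larger of the two. This gives
\[
|x^2-dy^2|\le \max\Bigl(\tfrac14,\tfrac d4\Bigr)=\tfrac d4 .
\]
For $d=2$ and $d=3$ this equals $\tfrac12$ and $\tfrac34$ respectively, both strictly less than $1$. Hence $v(\rho)<v(b)$, completing the division algorithm. I would also remark that for larger $d$ this crude bound fails, which explains why the statement is restricted to $d=2,3$.
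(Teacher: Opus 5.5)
Your proposal is correct and follows the paper's proof essentially step for step. Like the paper, you round the rational coordinates of $a/b$ to the nearest integers, write the remainder as $b$ times a small element of $\mathbb{Q}(\sqrt{d})$, and bound $|x^2-dy^2|\le\max(x^2,dy^2)\le d/4<1$ for $d=2,3$; your checks that $v$ vanishes only at $0$ and satisfies $v(ab)\ge v(b)$ are harmless additions that the paper leaves implicit.
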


\begin{proof}
	I first consider $R=\mathbb{Z}[\sqrt{d}]$, where $d=2$ or $3$.
	
	I want to show that $v$ is a Euclidean function.
	
	Let $\alpha,\beta\in R$ with $\beta\neq0$. Put
	\[
	\frac{\alpha}{\beta}=x+y\sqrt{d},
	\qquad x,y\in\mathbb{Q}.
	\]
	By the Archimedean property of $\mathbb{R}$, for every real number
	$x$ there exists $m\in\mathbb{Z}$ such that
	\[
	\left|x-m\right|\leq\frac12.
	\]
	Similarly, there exists $n\in\mathbb{Z}$ such that
	\[
	\left|y-n\right|\leq\frac12.
	\]
	Choose
	\[
	q=m+n\sqrt{d}\in R.
	\]
	Then
	\[
	\frac{\alpha}{\beta}-q
	=(x-m)+(y-n)\sqrt{d}.
	\]
	Put
	\[
	r=\alpha-q\beta.
	\]
	Clearly $r\in R$, and
	\[
	r=\beta\left(\frac{\alpha}{\beta}-q\right).
	\]
	Hence, by the multiplicativity of the norm,
	\[
	v(r)
	=
	v(\beta)
	\left|
	N\left((x-m)+(y-n)\sqrt{d}\right)
	\right|.
	\]
	Now
	\[
	\begin{aligned}
		\left|
		N\left((x-m)+(y-n)\sqrt{d}\right)
		\right|
		&=
		\left|(x-m)^2-d(y-n)^2\right|\\
		&\leq
		\max\left\{(x-m)^2,d(y-n)^2\right\}\\
		&\leq
		\frac{d}{4}.
	\end{aligned}
	\]
	For $d=2$ or $3$, I have
	\[
	\frac{d}{4}<1.
	\]
	Therefore
	\[
	v(r)
	\leq
	\frac{d}{4}v(\beta)
	<
	v(\beta).
	\]
	Since
	\[
	\alpha=q\beta+r,
	\]
	I have found $q,r\in R$ such that
	\[
	\alpha=q\beta+r
	\qquad\text{and}\qquad
	v(r)<v(\beta).
	\]
	Thus $v$ is a Euclidean function on $R$.
	
\end{proof}

\begin{lemma} {\bf\color{blue} (Remove it before publish)}
	The followings are equivalent:
	\begin{enumerate}
		\item[I.] $(a+b\sqrt{2})$ is unit in $\mathbb{Z}[\sqrt{2}]$.
		\item[II.] $(-a+b\sqrt{2})$ is unit in $\mathbb{Z}[\sqrt{2}]$.
		\item[III.] $(a-b\sqrt{2})$ is unit in $\mathbb{Z}[\sqrt{2}]$.
		\item[IV.] $(-a-b\sqrt{2})$ is unit in $\mathbb{Z}[\sqrt{2}]$.
		\item[V.] $N(a+b\sqrt{2})=\pm 1$.
	\end{enumerate}
\end{lemma}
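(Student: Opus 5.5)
The plan is to go around the cycle I $\Rightarrow$ V $\Rightarrow$ (II, III, IV) $\Rightarrow$ I, using only that $N$ is multiplicative and integer-valued on $\mathbb{Z}[\sqrt{2}]$ (with $N(a+b\sqrt{2})=a^2-2b^2$).

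\textbf{Step 1 (I $\Rightarrow$ V).} Suppose $(a+b\sqrt{2})w=1$ for some $w\in\mathbb{Z}[\sqrt{2}]$. Taking norms gives $N(a+b\sqrt{2})N(w)=N(1)=1$. Both factors are integers, so $N(a+b\sqrt{2})=\pm1$.

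\textbf{Step 2 (V $\Rightarrow$ III).} Assume $N(a+b\sqrt{2})=\pm1$. The identity
\[
(a+b\sqrt{2})(a-b\sqrt{2})=a^2-2b^2=\pm1
\]
holds in $\mathbb{Z}[\sqrt{2}]$. Multiplying by $\pm1$ shows that $\pm(a+b\sqrt{2})$ is an inverse of $a-b\sqrt{2}$ lying in $\mathbb{Z}[\sqrt{2}]$. Hence $a-b\sqrt{2}$ is a unit; the same identity also shows directly that $a+b\sqrt{2}$ is a unit.

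\textbf{Step 3 (V $\Rightarrow$ II, IV).} Since $-1$ is a unit and a product of units is a unit, $-a-b\sqrt{2}=(-1)(a+b\sqrt{2})$ is a unit. Likewise $-a+b\sqrt{2}=(-1)(a-b\sqrt{2})$ is a unit.

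\textbf{Step 4 (closing the cycle).} Each of II, III, IV implies V. This follows by applying Step 1 to the element in question, because all four elements $\pm a\pm b\sqrt{2}$ have the same norm $a^2-2b^2$. Then V implies I by Step 2. So all five statements are equivalent.

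I do not expect a real obstacle. The only point needing care is that the norm takes integer values on $\mathbb{Z}[\sqrt{2}]$, so that $N(x)N(w)=1$ forces $N(x)=\pm1$. This is noted at the start of the appendix.
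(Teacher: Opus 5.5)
Your proof is correct and complete. Multiplicativity and integrality of $N$ give I $\Rightarrow$ V. The conjugate supplies an explicit inverse $\pm(a-b\sqrt{2})$, which gives V $\Rightarrow$ I and, symmetrically, V $\Rightarrow$ III. Multiplying by the unit $-1$ handles II and IV. Finally, all four elements $\pm a\pm b\sqrt{2}$ share the norm $a^2-2b^2$, which closes the cycle.

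The paper's proof goes in a different direction. It never works through the implications among I--V. Instead it takes the criterion ``unit $\iff$ norm $\pm1$'' for granted: it uses it to dismiss the cases $a=0$ and $b=0$, and uses the sign-invariance of the norm to assume $a,b>0$ without loss of generality. Its actual effort goes into showing that a unit $a+b\sqrt{2}$ with $ab\neq0$ satisfies $|b|\le|a|<2|b|$, via $b^2\le 2b^2-1\le a^2\le 2b^2+1<4b^2$ for $b\ge1$. That bound is what the next lemma needs for its induction on $b$.

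The two arguments are therefore complementary. Yours gives the missing justification of the equivalence the lemma actually states, which the paper only uses implicitly. The paper's extracts the size bound used downstream. If your write-up is meant to serve the same later purpose, you would need to add that inequality separately.
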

\begin{proof}
	
	Let $x=a+b\sqrt{2}$. If $a=0$, then $N(b\sqrt{2})=-2b^2\neq \pm 1$  and $x$ is not a unit. If $b=0$, then $N(a)=a^2\neq \pm 1$  and $x$ is not a unit unless $a=\pm 1$. Now consider $a^2-2b^2=\pm 1$ with $a\neq 0\neq b$. Without loss of generality let $a,b>0$. Therefore $b\geq 1$. So $1\leq b^2<2b^2$. 
	
	Therefore $1\leq b^2 \implies b^2\leq 2b^2-1\leq 2b^2\pm 1=a^2 \implies b\leq a$ and
	
	$1\leq 2b^2\implies a^2=2b^2\pm 1\leq 2b^2+1< 4b^2 \implies a< 2b$
	
	So $a^2-2b^2=\pm 1 \text{ and } ab\neq 0\implies \mid b\mid \leq \mid a\mid < 2\mid b \mid$.
\end{proof}

\begin{lemma} {\bf\color{blue} (Remove it before publish)}
	Let $S=\{a+b\sqrt{2}~:~a,b\geq 0~\}\subseteq \mathbb{Z}[\sqrt{2}]$. I know that unit $a+b\sqrt{2}$ in $\mathbb{Z}[\sqrt{2}]$ with $0\neq b$ satisfy $\mid b\mid \leq \mid a\mid < 2\mid b \mid$. If $x \in S$ is invertible in $\mathbb{Z}[\sqrt{2}]$, then 
	$x = (1+\sqrt{2})^n$ for some positive integer $n$.
\end{lemma}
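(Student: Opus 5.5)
The plan is the classical descent on the size of a unit $x=a+b\sqrt{2}$ with $a,b\ge 0$. Set $\varepsilon=1+\sqrt{2}$, which is a unit with inverse $\varepsilon^{-1}=-1+\sqrt{2}$. We repeatedly divide $x$ by $\varepsilon$ and show that we stay inside $S$ while strictly decreasing $b$. The process must therefore terminate, at the unit $1$.

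\textbf{Base cases.} First handle $b=0$. Then $N(x)=a^2=\pm1$ with $a\ge0$, so $a=1$ and $x=1=\varepsilon^0$. (If positive $n$ is insisted upon, this case is excluded or trivially covered; $n\ge 0$ is the natural reading.) Also $a=0$ is impossible for a unit, by the previous lemma. So assume $a,b>0$. The preceding lemma then gives $b\le a<2b$.

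\textbf{Descent step.} Compute
\[
x\varepsilon^{-1}=(a+b\sqrt{2})(-1+\sqrt{2})=(2b-a)+(a-b)\sqrt{2}.
\]
Call this $a'+b'\sqrt{2}$. The inequalities $b\le a<2b$ give $a'=2b-a>0$ and $b'=a-b\ge 0$, so $x\varepsilon^{-1}\in S$. It is again a unit, since $\varepsilon$ is a unit. Moreover $b'=a-b<b$, because $a<2b$.

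\textbf{Conclusion by induction.} Induct (strongly) on $b\ge 0$. Either $b'=0$, so $x\varepsilon^{-1}=1$ and $x=\varepsilon$. Or $b'>0$, and then the inductive hypothesis gives $x\varepsilon^{-1}=\varepsilon^{k}$ for some $k\ge1$, so $x=\varepsilon^{k+1}$. Hence every unit in $S$ other than $1$ is $\varepsilon^n$ with $n\ge1$.

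The only delicate point is verifying that the new element stays in $S$, i.e.\ that $a'>0$ and $b'\ge0$. This is exactly where the inequality $b\le a<2b$ from the preceding lemma is used; everything else is routine.
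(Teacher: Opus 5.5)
Your proof is correct and follows the paper's own argument: strong induction on $b$, multiplying by $\sqrt{2}-1$ to get the unit $(2b-a)+(a-b)\sqrt{2}\in S$ with smaller $\sqrt{2}$-coefficient, which works by the inequality $b\le a<2b$. Your separate treatment of $b=0$ is a genuine improvement. The unit $x=1\in S$ forces $n=0$, so the statement's ``positive integer $n$'' is slightly off there, and the paper's base case $b=1$ silently skips this case.
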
	
\begin{proof}
	
	First notice that $(\sqrt{2}-1)(\sqrt{2}+1) = 1.$
	So $(1+\sqrt{2})$ is a unit.
	
	Now I use strong induction on $b$ for any unit $(a + b\sqrt{2}) \in S$.
	
	For $b=1$, the possible values of $a$ is $1$.  
	Here  $(1+\sqrt{2})$ is  invertible.  
	Therefore the statement is true for $b=1$.
	
	Assume the statement is true for $b = 1,2,\dots,k$.
	
	Now consider a unit $(a + b\sqrt{2})\in S$ where $b = k+1$.
	
	Then
	\[
	(a+(k+1)\sqrt{2})(\sqrt{2}-1)
	= (2k+2-a) + (a-k-1)\sqrt{2} \in S.
	\]
	
	as $(k+1) \le a < (2k+2)$. Also $ (2k+2-a)+(a-k-1)\sqrt{2}$ is unit in $\mathbb{Z}[\sqrt{2}]$ and $ a-k-1 \leq k.$
	
	So $(2k+2-a)+(a-k-1)\sqrt{2}=(1+\sqrt{2})^m $ for some natural number $m$, by the strong induction hypothesis.
	
	Multiplying both sides by $(1+\sqrt{2})$, I get
	\[
	a + (k+1)\sqrt{2}
	= \big[(2k+2-a) + (a-k-1)\sqrt{2}\big](1+\sqrt{2})
	= (1+\sqrt{2})^{m+1}.
	\]
	Hence all units in $S$ are of the given form. 
\end{proof}
	
	\begin{lemma} {\bf\color{blue} (Remove it before publish)}
		Let $T=\{a-b\sqrt{2}~:~a,b\geq 0\}\subseteq \mathbb{Z}[\sqrt{2}]$. I know that unit $a+b\sqrt{2}$ in $\mathbb{Z}[\sqrt{2}]$ with $0\neq b$ satisfy $\mid b\mid \leq \mid a\mid < 2\mid b \mid$.
		
		If $x \in T$ is invertible in $\mathbb{Z}[\sqrt{2}]$, then 
		\[
		x = (1-\sqrt{2})^n
		\]
		for some positive integer $n$.	
	\end{lemma}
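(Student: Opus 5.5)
The plan is to reduce the statement to the preceding lemma about the set $S=\{a+b\sqrt{2}~:~a,b\geq 0\}$ by using the conjugation map
\[
\sigma:\mathbb{Z}[\sqrt{2}]\to\mathbb{Z}[\sqrt{2}],\qquad \sigma(a+b\sqrt{2})=a-b\sqrt{2}.
\]
First I will check that $\sigma$ is a ring automorphism. It is additive and an involution. Multiplicativity comes from a direct expansion: $(a-b\sqrt2)(c-d\sqrt2)=(ac+2bd)-(ad+bc)\sqrt2$, which is $\sigma$ of $(a+b\sqrt2)(c+d\sqrt2)$. In particular $\sigma$ maps units to units and commutes with taking powers, so $\sigma(u^n)=\sigma(u)^n$.

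Next, let $x=a-b\sqrt{2}\in T$ with $a,b\geq 0$ be a unit. By the equivalence of I and III in the earlier lemma on units of $\mathbb{Z}[\sqrt{2}]$, or equivalently because $N(a-b\sqrt2)=N(a+b\sqrt2)$, the element $\sigma(x)=a+b\sqrt{2}$ is a unit. Since $a,b\geq0$, it lies in $S$. The previous lemma then gives $a+b\sqrt{2}=(1+\sqrt{2})^n$ for some $n$.

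Applying $\sigma$ to both sides and using multiplicativity gives
\[
x=\sigma\big((1+\sqrt2)^n\big)=\big(\sigma(1+\sqrt2)\big)^n=(1-\sqrt{2})^n,
\]
which is the claim.

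The only delicate point is the edge case $b=0$, rather than the main argument. There $N(a)=a^2=\pm1$ forces $a=1$, so $x=1=(1-\sqrt2)^0$. This is covered only if $n=0$ is allowed, exactly as in the preceding lemma for $S$, where its induction starts at $b=1$. I will therefore note that "positive integer" should be read as "non-negative integer" to include $x=1$, or else exclude $b=0$ explicitly. The case $a=0$ cannot occur, since then $N(x)=-2b^2\neq\pm1$.
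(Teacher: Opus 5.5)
Your argument is correct, but it takes a different route from the paper.

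\textbf{The paper's route.} The paper does not invoke the lemma for $S$ at all. It reruns the strong induction on $b$ directly inside $T$. Using the bound $b\le a<2b$, it multiplies a unit $a-(k+1)\sqrt2$ by $-1-\sqrt2=(1-\sqrt2)^{-1}$. This gives the unit $(2k+2-a)-(a-k-1)\sqrt2\in T$, which has a smaller $\sqrt2$-coefficient. The paper then applies the hypothesis and multiplies back by $1-\sqrt2$. The paper's final display has sign slips: it writes $+(a-k-1)\sqrt2$ and the factor $(1+\sqrt2)$. The descent itself is sound.

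\textbf{Your route.} You transport the $S$-lemma through the conjugation automorphism $\sigma$. This is shorter and avoids duplicating the descent and its bookkeeping. It also makes clear that the $T$-statement is simply the Galois conjugate of the $S$-statement. The cost is that your proof inherits everything the $S$-lemma relies on. The paper's version is self-contained given only the inequality $|b|\le|a|<2|b|$.

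\textbf{The case $b=0$.} Your remark here is a genuine correction. The element $1\in T$ is a unit but is not $(1-\sqrt2)^n$ for any positive $n$. The paper's induction starts at $b=1$ and silently omits this case. So the statement should allow $n=0$, or exclude $b=0$, exactly as you say. The same applies to the lemma for $S$.
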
	
	\begin{proof}
		Now I use strong induction on $b$ for any unit $a - b\sqrt{2} \in T$.
		
		For $b=1$, the possible value of $a$ is $1$.  
		
		Therefore the statement is true for $b=1$.
		
		Assume the statement is true for $b = 1,2,\dots,k$,$\mid b\mid \geq 2$
		
		Now consider a unit $a-b\sqrt{2}$ where $b = k+1$.
		
		Then $(a-(k+1)\sqrt{2})(-1 - \sqrt{2})
		= (2k+2-a) - (a-k-1)\sqrt{2}.$
		
		Since $(k+1) \le a \le (2k+2)$, I have $a-k-1 \le k.$
		
		Also $(2k+2-a) - (a-k-1)\sqrt{2}$ is a unit.  
		By the strong induction hypothesis, $(2k+2-a) - (a-k-1)\sqrt{2} = (1-\sqrt{2})^m$
		for some natural number $m$.
		
		Multiplying both sides by $(1-\sqrt{2})$, I get
		\[
		a - (k+1)\sqrt{2}
		= \big[(2k+2-a) + (a-k-1)\sqrt{2}\big](1+\sqrt{2})
		= (1-\sqrt{2})^{m+1}.
		\]
		
		Hence all units in $T$ are of given form.
		\end{proof}
		\begin{theorem} {\bf\color{blue} (Remove it before publish)}
			The unit group is $U(\mathbb{Z}[\sqrt{2}]) = \{ \pm (1\pm \sqrt{2})^n : n \in \mathbb{N} \}\cup\{\pm 1\}$.
		\end{theorem}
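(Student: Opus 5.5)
We must show that $U(\mathbb{Z}[\sqrt{2}])$ equals $\{\pm(1\pm\sqrt{2})^n : n\in\mathbb{N}\}\cup\{\pm 1\}$. The proof has two inclusions, and the real work is the inclusion $\subseteq$, which reduces to the two preceding lemmas on $S$ and $T$.

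For $\supseteq$, note that $N(1+\sqrt{2})=-1$, so $1+\sqrt{2}$ is a unit with inverse $\sqrt{2}-1=-(1-\sqrt{2})$. Hence $1-\sqrt{2}$ is also a unit. Units form a group, so every $\pm(1\pm\sqrt{2})^n$ is a unit, and $\pm1$ trivially are.

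For $\subseteq$, let $x=a+b\sqrt{2}$ be a unit. By the equivalence lemma (I$\Leftrightarrow$V), $N(x)=a^2-2b^2=\pm1$.
\begin{itemize}
\item If $b=0$, then $a^2=\pm1$, so $x=\pm1$.
\item If $a=0$, then $-2b^2=\pm1$, which is impossible.
\end{itemize}
So assume $ab\neq0$ and split by signs. The same lemma shows that $\pm a\pm b\sqrt{2}$ are all units.
\begin{itemize}
\item If $a,b>0$, then $x\in S$, and the lemma on $S$ gives $x=(1+\sqrt{2})^n$.
\item If $a>0>b$, then $x=a-|b|\sqrt{2}\in T$, and the lemma on $T$ gives $x=(1-\sqrt{2})^n$.
\item If $a<0$, then $-x$ is a unit with positive rational part, so one of the two cases above applies to $-x$, giving $x=-(1\pm\sqrt{2})^n$.
\end{itemize}
Thus every unit lies in the claimed set.

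The main obstacle is making sure the $S$ and $T$ lemmas genuinely cover all of these units. Their inductions rely on the bound $|b|\le|a|<2|b|$ from the equivalence lemma. In the $T$ case I would recheck the descent step: multiplying $a-b\sqrt{2}$ by $-1-\sqrt{2}$ must produce an element of $T$ with strictly smaller $b$-coordinate. I would verify this directly:
\[
(a-(k+1)\sqrt{2})(-1-\sqrt{2}) = (2k+2-a) - (a-k-1)\sqrt{2}.
\]
The bound $(k+1)\le a<2k+2$ makes both coordinates nonnegative with $a-k-1\le k$. The degenerate case $a-k-1=0$ forces the element to be $2k+2-a=k+1$ with norm $(k+1)^2=\pm1$, so $k=0$, which is the base case. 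Reconstructing $x$ from this step uses $(1-\sqrt{2})(-1-\sqrt{2})=1$, i.e.\ multiplication by $1-\sqrt{2}$. This fixes the misprinted final display in that lemma, which multiplies by $(1+\sqrt{2})$ instead.
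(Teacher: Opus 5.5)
Your proposal is correct and is essentially the paper's own argument: rule out $a=0$, get $x=\pm1$ when $b=0$, then split by the signs of $a$ and $b$ and apply the lemmas on $S$ and $T$ (to $-x$ when $a<0$). Your additions are also correct and worth keeping. These are the inclusion $\supseteq$, which the paper never states; the check that $a-k-1=0$ forces $k=0$, so the descent always lands in the range of the induction hypothesis (the paper omits this check in the $S$ lemma too); and the fix of the last display in the $T$ lemma, which must multiply by $1-\sqrt{2}$ since $(1-\sqrt{2})(-1-\sqrt{2})=1$.
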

\begin{proof}
	Suppose $x$ is unit in $\mathbb{Z}[\sqrt{2}]$. If $x$ is integer then the only possibility is $\pm 1$. No unit is integer multile of $\sqrt{2}$.
	
	Suppose $x=a+ b\sqrt{2}$ where $a,b$ are positive integer. Then $x$ is of the form $(1+\sqrt{2})^n$.
	
	Suppose $x=a- b\sqrt{2}$ where $a,b$ are positive integer. Then $x$ is of the form $(1-\sqrt{2})^n$.
	
	Suppose $x=-a- b\sqrt{2}=-(a+ b\sqrt{2})$ where $a,b$ are positive integer. Then $x$ is of the form $-(1+\sqrt{2})^n$.
	
	Suppose $x=-a+ b\sqrt{2}=-(a- b\sqrt{2})$ where $a,b$ are positive integer. Then $x$ is of the form $-(1-\sqrt{2})^n$.
\end{proof}

\begin{theorem}\label{units-theory2}
	Let $y=\alpha+\beta\sqrt{d}$ is the minimum number which is unit and greater than 1 in $\mathbb{Z}[\sqrt{d}]$ with $N(y)=-1$ and $d\equiv 2\pmod{4}$. Then the units of $\mathbb{Z}[\sqrt{d}]$ are given below:
	\begin{enumerate}
		\item units $y^n$ are positive and greater than $1$,
		\item units $-y^n$ are negative and less than $-1$,
		\item units $(\overline{y})^n$ is positive and lies between $0$ and $1$ for even $n$ and  units $-(\overline{y})^n$ is positive and lies between $0$ and $1$ for odd $n$,
		\item   units $(\overline{y})^n$ is negative and lies between $-1$ and $0$ for odd $n$ and  units $-(\overline{y})^n$ is negative and lies between $-1$ and $0$ for even $n$.
	\end{enumerate}
\end{theorem}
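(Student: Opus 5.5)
Since $y>1$ is a unit with $N(y)=y\overline{y}=-1$, the whole proof comes from two facts. Every unit of $\mathbb{Z}[\sqrt{d}]$ is $\pm y^n$ for $n\in\mathbb{Z}$ (Dirichlet, with $y$ fundamental), and $\overline{y}=-y^{-1}$. Hence the units split as $\pm y^n$ with $n\ge 0$ and $\pm\overline{y}^{\,n}$ with $n\ge 0$, since $y^{-n}=(-1)^n\overline{y}^{\,n}$. The four items then follow from sign bookkeeping together with the inequalities $y^n>1$ and $0<|\overline{y}|^n<1$ for $n\ge1$.

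I would carry out the steps in this order.

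First, $y>1$ gives $y^n>1$ for every $n\ge1$, which is item (1). Multiplying by $-1$ gives $-y^n<-1$, which is item (2).

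Next, from $N(y)=-1$ we get $\overline{y}=-1/y$, so $-1<\overline{y}<0$. (One can also note $\alpha,\beta>0$, as in the paper's setup, but this is not needed.) Therefore $|\overline{y}^{\,n}|=y^{-n}\in(0,1)$, and the sign of $\overline{y}^{\,n}$ is $(-1)^n$. So $\overline{y}^{\,n}\in(0,1)$ for even $n$ and $\overline{y}^{\,n}\in(-1,0)$ for odd $n$. Negating, $-\overline{y}^{\,n}\in(0,1)$ for odd $n$ and $-\overline{y}^{\,n}\in(-1,0)$ for even $n$. This gives items (3) and (4), where $n\ge1$ and $n=0$ contributes only $\pm1$.

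Finally, for exhaustiveness I would note that any unit $u\neq\pm1$ has $|u|\neq1$. If $|u|>1$, then $u=\pm y^n$ with $n\ge1$. If $|u|<1$, then $u=\pm y^{-n}=\pm(-1)^n\overline{y}^{\,n}$, which falls into (3) or (4).

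The only nontrivial input is that $y$ generates the unit group modulo $\pm1$. I would prove this in the standard way. If $u>1$ is a unit, choose $n$ with $y^n\le u<y^{n+1}$. Then $uy^{-n}$ is a unit in $[1,y)$, so it equals $1$ by the minimality of $y$. This step is the main obstacle: it depends on $y$ being the \emph{smallest} unit exceeding $1$. Such a $y$ exists because units $a+b\sqrt{d}>1$ satisfy $a,b>0$, so only finitely many lie below any given bound.
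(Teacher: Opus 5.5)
Your proof is correct and complete. The paper gives no proof of Theorem~\ref{units-theory2}: it is stated in the appendix and then used as a black box (``all units are $\pm y^n,\pm z^n$'') in the proof of Theorem~\ref{theorem2d2}.

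The only comparable argument in the paper is the $d=2$ material just before it, namely the lemmas on the sets $S$ and $T$ and the description of $U(\mathbb{Z}[\sqrt{2}])$. That argument is a descent on coefficients. It uses the explicit bound $|b|\le|a|<2|b|$ for a unit $a+b\sqrt{2}$. Multiplying by $\sqrt{2}-1$ (resp.\ $-1-\sqrt{2}$) gives a unit of the same sign pattern with strictly smaller $|b|$. Strong induction on $|b|$ then reduces everything to $1\pm\sqrt{2}$.

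You replace this descent with the archimedean sandwich $y^n\le u<y^{n+1}$ together with the minimality of $y$. Your route needs no a priori inequality on coefficients and no explicit knowledge of $y$, so it proves the statement uniformly in $d$. The hypothesis $d\equiv 2\pmod 4$ is never used. The hypothesis $N(y)=-1$ enters only through $\overline{y}=-1/y$, which is exactly what produces the sign alternation in items (3) and (4). The paper's descent, as written, is tied to the $d=2$-specific inequality and to the known unit $1+\sqrt{2}$.

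Your explicit treatment of $n=0$ (it contributes only $\pm1$) also repairs an imprecision in the statement. If $n=0$ is allowed, the four items are false there. If $n\ge 1$ is intended, the list omits $\pm1$.
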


\begin{theorem}\label{units-theory3}
	Let $y=\alpha+\beta\sqrt{d}$ is the minimum number which is unit and greater than 1 in $\mathbb{Z}[\sqrt{d}]$ and $d\equiv 3\pmod{4}$.Then the units of $\mathbb{Z}[\sqrt{d}]$ are given below:
	\begin{enumerate}
		\item units $y^n$ are positive and greater than $1$,
		\item units $-y^n$ are negative and less than $-1$,
		\item units $(\overline{y})^n$ is positive and lies between $0$ and $1$ 
		\item   units $-(\overline{y})^n$ is negative and lies between $-1$ and $0$.
	\end{enumerate}
\end{theorem}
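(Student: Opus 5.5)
The statement to prove is Theorem~\ref{units-theory3}. The plan is to show two things: every unit of $\mathbb{Z}[\sqrt{d}]$ is $\pm y^{n}$ for some $n\in\mathbb{Z}$, and $\overline{y}=y^{-1}$. The four listed families, $\pm y^{n}$ and $\pm(\overline{y})^{n}$ with $n\geq 0$, then exhaust the units, and their signs and sizes follow by inspection.

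\emph{Step 1: $N(y)=1$.} As noted at the start of Section~\ref{sec:mainsection3}, an element $x$ is a unit if and only if $N(x)=\pm1$. For $a+b\sqrt{d}$ with $d\equiv 3\pmod 4$ we have
\[
a^2-db^2\equiv a^2+b^2 \pmod 4,
\]
and $a^2+b^2\pmod 4$ lies in $\{0,1,2\}$, so it never equals $3\equiv -1$. Hence every unit has norm $1$. In particular $y\overline{y}=1$, so $\overline{y}=y^{-1}\in(0,1)$. It follows that $(\overline{y})^{n}=y^{-n}\in(0,1]$ for $n\geq0$, which gives item (3), and $-(\overline{y})^{n}\in[-1,0)$, which gives item (4). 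Items (1) and (2) are immediate from $y>1$.

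\emph{Step 2: every positive unit is a power of $y$.} Let $u>0$ be a unit. Since $y>1$, the powers $y^{n}$ with $n\in\mathbb{Z}$ partition $(0,\infty)$ into the intervals $[y^{n},y^{n+1})$. Choose $n$ with $y^{n}\le u<y^{n+1}$. Then $v=uy^{-n}$ is a unit with $1\le v<y$. By the minimality of $y$ among units greater than $1$, we get $v=1$, so $u=y^{n}$. A negative unit $u$ gives the positive unit $-u$, so $u=-y^{n}$. Rewriting $y^{-n}=(\overline{y})^{n}$ for $n>0$ then yields exactly the four families in the statement.

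\emph{Main obstacle: existence of $y$.} The one point that needs care is that a minimum unit greater than $1$ actually exists. The theorem takes $y$ as given, but I would justify it in two parts.

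First, nontrivial units exist: the Pell equation $a^2-db^2=1$ has a solution with $b\neq0$. I would quote this classical fact rather than reprove it.

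Second, the infimum over units greater than $1$ is attained. Write a unit $u>1$ as $u=a+b\sqrt{d}$. Since $\overline{u}=1/u\in(0,1)$, we have $a=(u+\overline{u})/2>0$ and $b\sqrt{d}=(u-\overline{u})/2>0$. So $a,b\geq1$, and each of $a,b$ is bounded in terms of $u$. Consequently only finitely many units lie in any interval $(1,C]$, and the minimum exists.
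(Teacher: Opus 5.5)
Your proof is correct, and it is more complete than the paper's own treatment. The paper states this theorem in the appendix without any proof. The only supporting reasoning it gives is the mod-$4$ norm table at the start of Section~\ref{sec:mainsection3}, which is exactly your Step~1: it excludes $N(y)=-1$ and concludes $\overline{y}=1/y$. The claim that every unit is $\pm y^{n}$ is simply asserted there.

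The one place the paper actually proves a description of a unit group is for $\mathbb{Z}[\sqrt{2}]$, by strong induction on the coefficient $b$. It first derives $|b|\le|a|<2|b|$ for units, then multiplies by $\sqrt{2}-1$ to descend to a unit with smaller $b$. Your Step~2 replaces that descent with the standard sandwiching argument: $y^{n}\le u<y^{n+1}$ forces $uy^{-n}=1$ by minimality of $y$. This works uniformly for every $d\equiv 3\pmod 4$ and needs no coefficient inequalities or explicit knowledge of $y$. The descent is more hands-on, but it is tied to the particular unit $1+\sqrt{2}$ and would need a separate inequality for each $d$.

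Your existence paragraph is not required, since the theorem takes $y$ as a hypothesis, but it is sound and supplies a justification the paper never gives. Likewise, writing $(0,1]$ correctly handles the degenerate case $n=0$, which the statement's wording glosses over.
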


\begin{theorem}(Primes in {$\mathbb{Z}[\sqrt{2}]$}) {\bf\color{blue} (Remove it before publish)}
	Let $p$ is a positive odd prime integer. Then I have the following facts. 
	\begin{itemize}
	\item If $p\cong 3,5~(mod~8)$ then it is irreducible in $\mathbb{Z}[\sqrt{2}]$. I see that it's conjugate is itself. I denote it by $\alpha_p$.
	\item If $p\cong 1,7~(mod~8)$, there exist two irreducibles $u\pm v\sqrt{2}$ such that either $N(u\pm v\sqrt{2})=+p$ or $N(u\pm v\sqrt{2})=-p$ with odd $u$ and non zero $v$. Here I denote $u+ v\sqrt{2}$ by $\alpha_p$.
	\item I also have $\sqrt{2}$ is irreducible in $\mathbb{Z}[\sqrt{2}]$. I denote $\sqrt{2}$ by $\alpha_2$. 
	\end{itemize}
	Then $\{\alpha_p: p \text{ is prime}\}\cup \{\overline{\alpha_p}: p \text{ is prime and }p\cong 1,7\pmod{8} \}$ is the list of all irreducibles in $\mathbb{Z}[\sqrt{2}]$ upto associates.
\end{theorem}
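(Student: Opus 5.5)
The statement to prove is the classification of primes in $\mathbb{Z}[\sqrt{2}]$. The plan is to use that $\mathbb{Z}[\sqrt{2}]$ is a Euclidean domain with respect to $v=|N|$, proved above, hence a UFD in which irreducibles are exactly primes. Every irreducible $\pi$ divides $N(\pi)=\pi\overline{\pi}\in\mathbb{Z}$. Factoring $|N(\pi)|$ into rational primes, $\pi$ divides some rational prime $p$. So it suffices to factor each rational prime $p$ in $\mathbb{Z}[\sqrt{2}]$ and to note that distinct rational primes share no irreducible factor: if $\pi\mid p$ and $\pi\mid q$ with $p\neq q$, then $\pi\mid 1$ by Bezout in $\mathbb{Z}$.

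\textbf{The prime $2$ and inert primes.} First, $2=(\sqrt{2})^2$ and $N(\sqrt{2})=-2$. Any element of prime absolute norm is irreducible, since a factorization $\sqrt{2}=ab$ gives $|N(a)||N(b)|=2$, forcing one factor to be a unit. So $\alpha_2=\sqrt{2}$ is irreducible.

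For odd $p$, suppose $p=ab$ with $a,b$ non-units. Then $N(a)N(b)=p^2$ forces $N(a)=\pm p$, so $u^2-2v^2=\pm p$ for some integers $u,v$. Reducing modulo $p$ gives $u^2\equiv 2v^2$. Here $p\nmid v$, since otherwise $p\mid u$ and then $p^2\mid N(a)$. Hence $2$ is a quadratic residue mod $p$. By the second supplementary law, $\left(\frac{2}{p}\right)=1$ iff $p\equiv \pm1\pmod 8$. Thus for $p\equiv 3,5\pmod 8$, $p$ is irreducible and is its own conjugate.

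\textbf{Split primes.} For $p\equiv 1,7\pmod 8$, choose $t$ with $t^2\equiv 2\pmod p$. Then $p\mid t^2-2=(t-\sqrt{2})(t+\sqrt{2})$. However, $p$ divides neither factor, because $(t\pm\sqrt{2})/p\notin\mathbb{Z}[\sqrt{2}]$ (the $\sqrt{2}$-coefficient is $\pm1/p$). So $p$ is not prime, hence not irreducible in the UFD, and $p=ab$ with $N(a)=\pm p$. Writing $a=u+v\sqrt{2}$, we get $u^2-2v^2=\pm p$; here $v\neq0$, and $u$ is odd since $p$ is odd. Then $a$ is irreducible, having prime norm, and $p=\pm a\overline{a}$ up to a unit. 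Set $\alpha_p=a$.

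Any irreducible dividing $p$ is associate to $a$ or $\overline{a}$, by unique factorization of $p$. This gives the stated list. If one also wants $a$ and $\overline{a}$ to be non-associate, argue as follows. If $\overline{a}=\varepsilon a$, then $a^2\mid a\overline{a}$ up to a unit, so $a\mid p/a\sim\overline{a}$, and $p=a\overline{a}$ up to a unit would be divisible by $a^2$. Taking norms, $p^2\mid p^2$ gives nothing, so instead use $a\mid\overline{a}$ and $a\mid a$: then $a\mid a+\overline{a}=2u$ and $a\mid 2\sqrt{2}\,v$. Taking norms, $p\mid 4u^2$, so $p\mid u$, hence $p\mid 2v^2$, so $p\mid v$. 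This contradicts $|N(a)|=p$.

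\textbf{Main obstacle.} The main obstacle is the splitting step: producing an element of norm $\pm p$. That is where the Euclidean (UFD) property is essential, since it turns "not prime" into "not irreducible". The rest is norm bookkeeping.
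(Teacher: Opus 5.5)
Your proof is correct, but it takes a partly different route from the results the paper places after this statement.

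\emph{Inert primes.} The paper identifies $\mathbb{Z}[\sqrt{2}]/(p)\cong\mathbb{F}_p[x]/(x^2-2)$ and reads off that this is a field exactly when $2$ is a non-residue mod $p$. It also proves the supplementary law itself, via a root of $x^4+1$ and Frobenius. You argue with norms instead: a nontrivial factor would have norm $\pm p$, which forces $u^2\equiv 2v^2\pmod p$ with $p\nmid v$.

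\emph{Split primes.} The paper avoids unique factorization. It uses Thue's lemma to construct $u,v$ with $|u|,|v|\le\sqrt{p}$ and $u^2-2v^2\in\{-2p,-p,p\}$, then applies ``prime norm implies irreducible''. You use the Euclidean property to turn ``$p\mid(t-\sqrt{2})(t+\sqrt{2})$ but $p$ divides neither factor'' into reducibility of $p$.

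\emph{What each route buys.} The paper's route is constructive, since it gives a size bound on $\alpha_p$. It also needs no UFD input to produce an element of norm $\pm p$. Yours is shorter, and the same UFD input also gives two things the paper never writes down:
\begin{itemize}
\item every irreducible divides a rational prime, so by unique factorization of $2=(\sqrt{2})^2$, of an inert $p$, and of $p=\pm a\overline{a}$, it is associate to something on the list;
\item $\alpha_p$ and $\overline{\alpha_p}$ are not associates.
\end{itemize}
On the final ``list of all irreducibles up to associates'' claim, your proposal is therefore more complete than the paper.

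One presentational fix: delete the false start ``Taking norms, $p^2\mid p^2$ gives nothing''. The chain $a\mid a+\overline{a}=2u\Rightarrow p\mid 4u^2\Rightarrow p\mid u\Rightarrow p\mid v$, contradicting $|N(a)|=p$, stands on its own. The divisibility $a\mid 2\sqrt{2}\,v$ is never used and can also go.
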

\begin{fact} {\bf\color{blue} (Remove it before publish)}
	Let $p$ is odd prime. Then 2 is a square in $\mathbb{Z}_p$ iff $p\cong \pm 1~(mod~8)$.
\end{fact}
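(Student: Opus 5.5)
We take $\mathbb{Z}_p$ to mean $\mathbb{Z}/p\mathbb{Z}$, the field with $p$ elements, and prove the Fact as the second supplement to quadratic reciprocity. The plan is to work in a finite extension of $\mathbb{F}_p$ that contains a primitive $8$-th root of unity and to detect, through the Frobenius map, whether a square root of $2$ already lies in $\mathbb{F}_p$. As a cross-check we also outline the classical route through Euler's criterion and Gauss's lemma.

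\textbf{Step 1 (a square root of $2$ in an extension).} Since $p$ is odd, $8 \mid p^2-1$. The group $\mathbb{F}_{p^2}^\times$ is cyclic of order $p^2-1$, so it contains an element $\zeta$ of exact order $8$. Then $\zeta^4=-1$, and the identity $\zeta^4+1=0$ gives $\zeta^2+\zeta^{-2}=0$. Put $\tau=\zeta+\zeta^{-1}$. A direct computation gives
\[
\tau^2=\zeta^2+2+\zeta^{-2}=2 .
\]
So $\tau$ is a square root of $2$ in $\mathbb{F}_{p^2}$. Because $\mathbb{F}_p$ is a field, $2$ has at most the two square roots $\pm\tau$. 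Hence $2$ is a square in $\mathbb{F}_p$ if and only if $\tau\in\mathbb{F}_p$, that is, if and only if $\tau^p=\tau$.

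\textbf{Step 2 (computing the Frobenius).} In characteristic $p$ the map $x\mapsto x^p$ is additive, so $\tau^p=\zeta^p+\zeta^{-p}$. This expression depends only on $p \bmod 8$.
\begin{itemize}
\item If $p\equiv \pm1 \pmod 8$, then $\zeta^p+\zeta^{-p}=\zeta+\zeta^{-1}=\tau$.
\item If $p\equiv \pm3 \pmod 8$, then $\tau^p=\zeta^3+\zeta^{-3}$. Using $\zeta^4=-1$ we get $\zeta^3=-\zeta^{-1}$ and $\zeta^{-3}=-\zeta$, so $\tau^p=-\tau$.
\end{itemize}
In the second case $-\tau\neq\tau$, because $\tau\neq 0$ (as $\tau^2=2\neq0$) and $p\neq2$. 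Therefore $\tau^p=\tau$ holds exactly when $p\equiv\pm1\pmod 8$, which proves the Fact.

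\textbf{Alternative route (Gauss's lemma).} Multiply together the residues $2,4,\dots,p-1$. This gives $2^{(p-1)/2}\left(\tfrac{p-1}{2}\right)! \equiv (-1)^{\mu}\left(\tfrac{p-1}{2}\right)! \pmod p$, where $\mu=\tfrac{p-1}{2}-\lfloor p/4\rfloor$ counts the even residues lying above $p/2$. Cancelling the factorial and applying Euler's criterion shows that $2$ is a square if and only if $\mu$ is even. Checking the four classes $p\equiv1,3,5,7\pmod 8$ then gives $\mu$ even exactly for $p\equiv\pm1$.

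\textbf{Main obstacle.} The only step needing care is the existence of an element of order exactly $8$ in some extension of $\mathbb{F}_p$, together with the correct use of the Frobenius; both follow from $8\mid p^2-1$ and cyclicity of $\mathbb{F}_{p^2}^\times$. In the Gauss-lemma route, the corresponding delicate point is the floor computation of $\mu$ across the four residue classes modulo $8$.
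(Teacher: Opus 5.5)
Your proof is correct and is essentially the paper's argument: the paper also takes a root $\alpha$ of $x^4+1$ in an extension of $\mathbb{Z}_p$, sets $y=\alpha+\alpha^{-1}$ so that $y^2=2$, and compares $y^p=\alpha^{p}+\alpha^{-p}$ with $\pm y$ according to $p \bmod 8$. Your write-up also makes explicit a step the paper leaves implicit in the case $p\equiv\pm3 \pmod 8$: $\pm\tau$ are the only square roots of $2$ and $\tau\neq-\tau$, so $\tau\notin\mathbb{F}_p$ really does mean $2$ is a non-square (the ``at most two roots'' claim should be made in the field $\mathbb{F}_{p^2}$, not $\mathbb{F}_p$).
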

\begin{proof}
	As $p$ is odd prime, then either $p\cong\pm 1~(mod~8)$ or $p\cong\pm 3~(mod~8)$.
	
	Let $\alpha$ be a root of $x^4+1=0$ in some extension $\mathbb{L}$ of the field $\mathbb{Z}_p$.
	
	Then $\alpha^4=-1$. So $\alpha\neq 0$, $\alpha^3=\frac{-1}{\alpha}$ and $\alpha^2+\alpha^{-2}=0$. 
	
	Also I have $\alpha^7=\frac{1}{\alpha}$,   $\alpha^6={\alpha^{-2}}$ and $\alpha^5={-\alpha}={\alpha^{-3}}$.  
	
	Let $y=\alpha+\frac{1}{\alpha}\in\mathbb{L}$. Then $y^2=2$.
	
	As $char(\mathbb{L})=p$, I have $y^p=\alpha^p+\alpha^{-p}$.
	
	Let $p\cong r~(mod~8)$. Then $\alpha^p=\alpha^r$. So $y^p=\alpha^r+\alpha^{-r}$.
	
	If $p\cong\pm 1~(mod~8)$, then $y^p=y$. Therefore $y$ is a solution of $x^p-x=0$ over the field $\mathbb{F}_p$. So $y\in\mathbb{F}_p$.
	
	If $p\cong\pm 3~(mod~8)$, then $y^p=-y$. Therefore $y$ is a not a root of  of $x^p-x$ over the field $\mathbb{Z}_p$. So $y\notin\mathbb{Z}_p$.
\end{proof}
\begin{fact} {\bf\color{blue} (Remove it before publish)}
	Let $\mathbb{K}$ be a number field and let $\mathcal{O}_\mathbb{K}$ be its ring of integers.
	Suppose that $\alpha\in\mathcal{O}_\mathbb{K}$ is a nonzero non-unit such that $\left|N_{\mathbb{K}/\mathbb{Q}}(\alpha)\right|=p,$
	where $p$ is a prime integer. Then $\alpha$ is irreducible in $\mathcal{O}_\mathbb{K}$.
\end{fact}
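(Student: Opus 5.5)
The statement is the standard fact that an element of prime absolute norm in $\mathcal{O}_\mathbb{K}$ is irreducible. I will argue directly from the multiplicativity of the norm $N_{\mathbb{K}/\mathbb{Q}}$ on $\mathcal{O}_\mathbb{K}$, which takes values in $\mathbb{Z}$.

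\emph{Step 1.} Suppose $\alpha=\gamma\delta$ with $\gamma,\delta\in\mathcal{O}_\mathbb{K}$. Taking norms gives
\[
p=\left|N(\alpha)\right|=\left|N(\gamma)\right|\cdot\left|N(\delta)\right|,
\]
where both factors on the right are positive integers. Since $\alpha\neq 0$, neither $\gamma$ nor $\delta$ is zero. Because $p$ is prime, one of $\left|N(\gamma)\right|$, $\left|N(\delta)\right|$ must equal $1$.

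\emph{Step 2.} I then use the characterization of units: an element $\gamma\in\mathcal{O}_\mathbb{K}$ is a unit if and only if $N(\gamma)=\pm1$.

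For the forward direction, if $\gamma\gamma'=1$ then $N(\gamma)N(\gamma')=1$, and both norms are integers.

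For the converse, suppose $N(\gamma)=\pm1$. Write $N(\gamma)$ as the product of all Galois conjugates of $\gamma$. Then
\[
\gamma^{-1}=\pm\prod_{\sigma\neq \mathrm{id}}\sigma(\gamma),
\]
taken over the nontrivial embeddings. This is a product of algebraic integers, so it is an algebraic integer, and it lies in $\mathbb{K}$; hence $\gamma^{-1}\in\mathcal{O}_\mathbb{K}$.

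In the quadratic setting of the paper this is simply $\gamma^{-1}=\pm\overline{\gamma}$. In general, one can instead note that the constant term of the minimal polynomial of $\gamma$ is $\pm1$, which lets one solve for $\gamma^{-1}$ as a polynomial in $\gamma$ with integer coefficients.

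\emph{Step 3.} By Steps 1 and 2, one of $\gamma,\delta$ is a unit. Moreover, $\alpha$ is a nonzero non-unit, since its norm has absolute value $p\neq 1$. So every factorization of $\alpha$ is trivial, and $\alpha$ is irreducible.

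The only step needing care is the reverse implication in Step 2 for a general number field, namely that $N(\gamma)=\pm1$ forces $\gamma^{-1}$ to be integral. I would handle it with the minimal-polynomial argument, which avoids Galois closures. Everything else is immediate from multiplicativity of the norm.
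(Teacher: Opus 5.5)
Your proof is correct and follows the same route as the paper: factor $\alpha$, apply multiplicativity of the norm, and combine primality of $p$ with the fact that the units of $\mathcal{O}_\mathbb{K}$ are exactly the elements of norm $\pm1$. The only difference is that the paper argues by contradiction and simply asserts that non-units have $\left|N(\cdot)\right|>1$, whereas your Step 2 actually proves this implication.
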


\begin{proof}
	Suppose, to the contrary, that $\alpha$ is reducible in $\mathcal{O}_\mathbb{K}$.
	Then there exist non-units $\beta,\gamma\in\mathcal{O}_K$ such that $\alpha=\beta\gamma$.
	
	By the multiplicativity of the field norm,
	\[
	\left|N_{\mathbb{K}/\mathbb{Q}}(\alpha)\right|
	=
	\left|N_{\mathbb{K}/\mathbb{Q}}(\beta)\right|
	\left|N_{\mathbb{K}/\mathbb{Q}}(\gamma)\right|.
	\]
	Since $\beta$ and $\gamma$ are non-units in $\mathcal{O}_K$, I have $\left|N_{K/\mathbb{Q}}(\beta)\right|>1$ and $\left|N_{K/\mathbb{Q}}(\gamma)\right|>1$.
	Thus $p
	=
	\left|N_{\mathbb{K}/\mathbb{Q}}(\beta)\right|
	\left|N_{\mathbb{K}/\mathbb{Q}}(\gamma)\right|$
	is a product of two integers greater than $1$, contradicting the
	primality of $p$. Therefore, $\alpha$ is irreducible in
	$\mathcal{O}_\mathbb{K}$.
\end{proof}
\begin{theorem} {\bf\color{blue} (Remove it before publish)}
	Odd prime integer $p$ is prime in $\mathbb{Z}[\sqrt{2}]$, if and only if $p\cong \pm 3~(mod~8)$
\end{theorem}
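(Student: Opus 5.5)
We must show that an odd prime $p$ is prime in $\mathbb{Z}[\sqrt{2}]$ if and only if $p\equiv\pm 3\pmod 8$. The plan is to work in the Euclidean domain $\mathbb{Z}[\sqrt{2}]$, where prime and irreducible coincide. The key tool is the Fact that $2$ is a square modulo $p$ exactly when $p\equiv\pm1\pmod 8$. Both directions reduce to deciding whether $p$ can be written as $N(x)=a^2-2b^2=\pm p$ for some $x\in\mathbb{Z}[\sqrt{2}]$.

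\emph{Direction 1: $p\equiv\pm1\pmod 8$ implies $p$ is not prime.} By the Fact, there is an integer $c$ with $c^2\equiv 2\pmod p$. Then
\[
p \mid c^2-2=(c-\sqrt{2})(c+\sqrt{2}).
\]
However, $p$ divides neither factor, since $(c\pm\sqrt{2})/p=c/p\pm(1/p)\sqrt{2}\notin\mathbb{Z}[\sqrt{2}]$. Hence $p$ is not prime.

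\emph{Direction 2: $p\equiv\pm3\pmod 8$ implies $p$ is prime.}
\begin{enumerate}
\item Since the ring is a PID (by the Euclidean theorem above), it suffices to show $p$ is irreducible.
\item Suppose $p=\gamma\delta$ with $\gamma,\delta$ non-units. Taking norms gives $p^2=N(\gamma)N(\delta)$, and neither norm is $\pm1$, so $N(\gamma)=\pm p$.
\item Write $\gamma=a+b\sqrt{2}$. Then $a^2-2b^2=\pm p$, so $a^2\equiv 2b^2\pmod p$.
\item If $p\nmid b$, then $(ab^{-1})^2\equiv 2\pmod p$, so $2$ is a square mod $p$, contradicting the Fact.
\item If $p\mid b$, then $p\mid a^2$, so $p\mid a$. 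Then $p^2\mid a^2-2b^2=\pm p$, which is impossible.
\end{enumerate}
So $p$ is irreducible, hence prime.

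The main subtlety is the irreducible-versus-prime distinction. It is handled by the Euclidean property proved above, or, in Direction 1, by directly exhibiting the failure of primality. I expect no real obstacle beyond carefully invoking the quadratic-character Fact in both directions.
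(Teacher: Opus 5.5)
Your proof is correct, but it takes a different route from the paper.

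The paper argues structurally. Using the isomorphism theorems it identifies $\mathbb{Z}[\sqrt{2}]/(p)\cong\mathbb{Z}[x]/(x^2-2,p)\cong\mathbb{Z}_p[x]/(x^2-2)$. Primality of $p$ then becomes the statement that this quotient is an integral domain, equivalently a field since it is finite. That holds exactly when $x^2-2$ is irreducible over $\mathbb{Z}_p$, i.e.\ when $2$ is a non-square mod $p$, and the Fact settles both directions at once.

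You instead work elementwise. For $p\equiv\pm1\pmod 8$ you exhibit the explicit failure $p\mid(c-\sqrt{2})(c+\sqrt{2})$ with $p$ dividing neither factor. For $p\equiv\pm3\pmod 8$ you use norms to rule out a nontrivial factorization, then invoke the Euclidean property to pass from irreducible to prime.

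What each approach buys:
\begin{itemize}
\item Your route avoids constructing the ring isomorphisms, which is the most laborious part of the paper's write-up, and it is completely explicit.
\item The paper's route needs no unique-factorization input at all. It also describes the residue ring $\mathbb{Z}[\sqrt{2}]/(p)$, which is the template for the general Dedekind--Kummer splitting criterion.
\end{itemize}

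Your appeal to the Euclidean theorem is in fact dispensable. Your steps 4--5 show that $p\mid N(x)$ forces $p\mid x$ for every $x\in\mathbb{Z}[\sqrt{2}]$. So if $p\mid xy$, then $p\mid N(x)N(y)$ in $\mathbb{Z}$, hence $p\mid N(x)$ or $p\mid N(y)$, and $p$ is prime directly.
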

\begin{proof}
	Third Isomorphism theorem, Ideals $I\subseteq J$ of the ring $R$
	
	Then $(R/I)/(J/I)\cong R/J$
	
	Let $I_1$ and $I_2$ be the ideal generated by $p$ in the ring $\mathbb{Z}[x]$ and $\mathbb{Z}[\sqrt{2}]$ respectively and  $I_3$ and $I_4$ be the ideal generated by $x^2-2$ in the ring $\mathbb{Z}[x]$ and $\mathbb{Z}_p[x]$ respectively.
	
	Case 1 :  Choose $R=\mathbb{Z}[x]$, $J=(x^2-2,p)$, $I_1=(p)$ 
	
	$\implies \mathbb{Z}_p[x]/I_4\cong \mathbb{Z}[x]/(x^2-2,p)$
	
	Case 2 :  Choose $R=\mathbb{Z}[x]$, $J=(x^2-2,p)$, $I_3=(x^2-2)$ 
	
	$\implies \mathbb{Z}[\sqrt{2}]/I_2\cong \mathbb{Z}[x]/(x^2-2,p)$
	
	So one can see that $R/I_1=\mathbb{Z}_p[x]$ and $R/I_3=\mathbb{Z}[\sqrt{2}]$
	
I want to show that $J/I_1\cong I_4$ and $J/I_3\cong I_2$
	
	Any element in $J$ can looks like $p\cdot f(x)+(x^2-2)\cdot g(x)$ where $f(x),g(x)\in\mathbb{Z}[x]$.
	
	By division algorithm, there exist $q(x),r_f(x)$ in $\mathbb{Z}[x]$ such that
	
	$f(x)=(x^2-2)q(x)+r_f(x)$ where either $r_f(x)=0$ or $deg(r_f(x))\leq 1$.
	
	Here $r_f(x)$ is either 0 or $deg(r_f(x))=1\text{ or }0$.
	
	So  $r_f(x)=a_f+b_fx$ for some $a_f,b_f\in\mathbb{Z}$.
	
	Then $f(x)=(x^2-2)q(x)+(a_f+b_fx)$ for some $a_f,b_f\in\mathbb{Z}$.
	
	Let $g(x)=\sum_{k=1}^n c_k~x^k$. 
	
	By division algorithm of integer, there exist integer $r_k$ such that $c_k=p\cdot s_k+r_k$ where $0\leq r_k\leq p-1$. In other words $r_k\in\mathbb{F}_p$.
	
	So $\widetilde{g}(x)=\sum_{k=1}^n r_kx^k$ can be considered as the member of both $\mathbb{Z}[x]$ and $\mathbb{Z}_p[x]$.
	
	Let $h(x)=\sum_{k=1}^n s_kx^k$.Then
	
	$\begin{aligned}&p\cdot f(x)+(x^2-2)\cdot g(x)\\=&p(x^2-2)\cdot [q(x)+h(x)]+p\cdot (a_f+b_fx)+(x^2-2)\cdot \widetilde{g}(x)(x)\end{aligned}$
	
	I know $I_1=p\cdot\mathbb{Z}[x]$,  $I_2=p\cdot\mathbb{Z}[\sqrt{2}]$, $I_3=(x^2-2)\mathbb{Z}[x]$
	
	and  $I_4=(x^2-2)\mathbb{Z}_p[x]$. So
	
	$\begin{aligned}&\text{ any element in }J/I_1\\=&p\cdot f(x)+(x^2-2)\cdot g(x) +I_1\\=&(x^2-2)\cdot \widetilde{g}(x)+p\cdot (a_f+b_fx)+p(x^2-2)\cdot [q(x)+h(x)]+p\cdot\mathbb{Z}[x]\\=&(x^2-2)\cdot \widetilde{g}(x)+p\cdot\mathbb{Z}[x]\\=&(x^2-2)\cdot \widetilde{g}(x)+I_1\end{aligned}$
	
	Here $\widetilde{g}(x)$ is in $\mathbb{Z}_p[x]$. 
	
	So  $\varphi:J\to I_4$ defined by $\varphi(p\cdot f(x)+(x^2-2)\cdot g(x))=(x^2-2)\cdot \widetilde{g}(x)$ is surjective ring homomorphism. Here $Ker(\varphi)=I_1$.
	
	By 1st Isomorphism theorem, $J/I_1\cong I_4$
	
	$\begin{aligned}&\text{ any element in }J/I_3\\=&p\cdot f(x)+(x^2-2)\cdot g(x) +I_3\\=&p\cdot (a_f+b_fx)+(x^2-2)\cdot \widetilde{g}(x)+p(x^2-2)\cdot [q(x)+h(x)]+(x^2-2)\cdot\mathbb{Z}[x]\\=&p\cdot (a_f+b_fx)+(x^2-2)\cdot\mathbb{Z}[x]\\=&p\cdot (a_f+b_fx)+I_3\end{aligned}$
	
	So  $\psi:J\to I_2$ defined by 
	
	$\psi(p\cdot f(x)+(x^2-2)\cdot g(x))=p\cdot (a_f+b_f\sqrt{2})=p\cdot f(\sqrt{2})$ is surjective ring homomorphism. Here $Ker(\psi)=I_3$.
	
	By 1st Isomorphism theorem, $J/I_3\cong I_2$

	Let $I_2=(p)$ be ideal generated by $p$ in $\mathbb{Z}[\sqrt{2}]$.
	
	Let $I_4=(x^2-2)$ be ideal generated by $x^2-2$ in $\mathbb{Z}_p[x]$. 
	
	Then $``p$ is irreducible$"$ if and only if
	$``\mathbb{Z}[\sqrt{2}]/(p)\cong \mathbb{Z}[x]/(x^2-2,p)\cong \mathbb{Z}_p[x]/(x^2-2)$ is field $"$ if and only if $``$ no element $y$ in $\mathbb{Z}_p$ so that $y^2=2"$.
	
	So $p\ncong\pm 1~(mod~8)$, i.e., $p\cong \pm 3~(mod~8)$
\end{proof}
\begin{lemma}[Thue's lemma {\cite[theorem 2.33]{Shoup2009}}] {\bf\color{blue} (Remove it before publish)} Let $a,n,r,s\in\mathbb{Z}$, such that $0<r\leq n<rs$, there are two integers $u$ and $v$ such that $u\cong av~(mod~n)$ and $0\leq|u|<r,~0\leq|v|<s$. but both $u,v$ not simultaneously 0. 
\end{lemma}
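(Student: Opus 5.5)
The plan is the classical pigeonhole argument for Thue's lemma. First I note that the hypotheses force $s\geq 1$: since $r>0$ and $rs>n\geq r>0$, we get $s>0$. So the index sets below are nonempty. I then consider the finite set of integer pairs
\[
P=\{(i,j)\in\mathbb{Z}^2~:~0\leq i\leq r-1,\ 0\leq j\leq s-1\},
\]
which has exactly $rs$ elements.

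Next I define the map $\phi:P\to\mathbb{Z}/n\mathbb{Z}$ by $\phi(i,j)=(i-aj)\bmod n$. Its target has $n$ elements and $|P|=rs>n$, so by the pigeonhole principle there are two distinct pairs $(i_1,j_1)\neq(i_2,j_2)$ in $P$ with $\phi(i_1,j_1)=\phi(i_2,j_2)$. Equivalently, $i_1-aj_1\equiv i_2-aj_2\pmod n$.

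I then put $u=i_1-i_2$ and $v=j_1-j_2$. Rearranging the congruence gives $u\equiv av\pmod n$. Because $0\leq i_1,i_2\leq r-1$, we have $|u|\leq r-1<r$, and likewise $|v|\leq s-1<s$. Finally, $u$ and $v$ cannot both be $0$, since the two pairs are distinct. This gives exactly the conclusion of the lemma.

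There is no serious obstacle here. The only points needing care are checking that $|P|=rs$ strictly exceeds the number $n$ of residue classes, which is precisely the hypothesis $n<rs$, and that the differences stay strictly within the bounds $r$ and $s$. The hypothesis $r\leq n$ is not actually used in this argument. It only matters in applications, for example when one takes $r=s=\lceil\sqrt{n}\,\rceil$ to represent primes $p\equiv 1,7\pmod 8$ by the form $u^2-2v^2$, as in the preceding theorem on primes in $\mathbb{Z}[\sqrt{2}]$.
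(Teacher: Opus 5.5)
Your pigeonhole argument is correct, and it is the standard proof of Thue's lemma as given in the cited source (Shoup, Theorem 2.33); the paper itself gives no proof and only cites that result. You are also right that $r\leq n$ is not needed for this weaker conclusion, since in the sharper form of the lemma it only serves to force $v\neq 0$ (a collision with $v=0$ would give $n\mid u$ with $0<|u|<r\leq n$); note, though, that your first line does use $n\geq r>0$ to get $n>0$ and hence $s\geq 1$, which is otherwise implicit in $n$ being a modulus.
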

\begin{theorem} {\bf\color{blue} (Remove it before publish)}
	Let $p$ be an odd prime such that $p\equiv \pm1\pmod 8.$
	Then $p$ is reducible in $\mathbb{Z}[\sqrt{2}]$. More precisely, $p$ can be written as a product of two non-units in $\mathbb{Z}[\sqrt{2}]$.
\end{theorem}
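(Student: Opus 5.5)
Let $p$ be an odd prime with $p\equiv\pm1\pmod 8$. We want to show that $p$ is reducible in $\mathbb{Z}[\sqrt{2}]$. The plan is the classical Thue-lemma argument, followed by a norm computation. By the preceding Fact on squares modulo $p$, $2$ is a square in $\mathbb{Z}_p$. So we fix an integer $a$ with $a^2\equiv 2\pmod p$. Then we apply Thue's lemma with $n=p$, $r=s=\lfloor\sqrt{p}\rfloor+1$. Since $p$ is not a perfect square, $r>\sqrt p$, so $0<r\le p<r^2=rs$. This gives integers $u,v$, not both zero, with $u\equiv av\pmod p$ and $|u|,|v|<\sqrt p$ (strict, because $|u|\le\lfloor\sqrt p\rfloor<\sqrt p$).

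Next we compute $u^2-2v^2\equiv a^2v^2-2v^2\equiv 0\pmod p$. On the other hand, $-2p<u^2-2v^2<p$, with the bounds strict because $u^2,v^2<p$. Hence $u^2-2v^2\in\{0,-p\}$.

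The value $0$ is impossible. Indeed, $u^2=2v^2$ with $(u,v)\neq(0,0)$ would make $\sqrt2$ rational. Therefore $u^2-2v^2=-p$. Equivalently,
\[
p=-(u+v\sqrt2)(u-v\sqrt2)=(-u-v\sqrt2)(u-v\sqrt2).
\]

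Finally, each factor has norm $\pm(u^2-2v^2)=\mp p\neq\pm1$. By the unit criterion (norm $\pm1$ iff unit, recorded in the appendix lemma), neither factor is a unit. This exhibits $p$ as a product of two non-units, as required.

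Along the way we get $N(u+v\sqrt2)=-p$. One can also note that $u$ is odd: if $u$ were even, then $p\equiv 2v^2\pmod 4$, which is impossible for odd $p$. Moreover $v\neq0$. This matches the description in the theorem on primes in $\mathbb{Z}[\sqrt2]$, and the earlier Fact (norm $\pm p$ implies irreducible) then shows these factors are irreducible.

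The only delicate step is the choice of $r,s$ so that Thue's lemma applies while keeping $|u|,|v|<\sqrt p$ strictly; everything else is routine. If one wanted to avoid Thue's lemma, the alternative is the ideal-theoretic route. By the previous theorem's isomorphism, $\mathbb{Z}[\sqrt2]/(p)\cong\mathbb{Z}_p[x]/(x^2-2)$. This quotient is not an integral domain when $2$ is a square mod $p$, so $p$ is not prime. Since $\mathbb{Z}[\sqrt2]$ is Euclidean, hence a UFD, irreducible and prime coincide, and $p$ is reducible. I would present the Thue argument as the main proof, since it gives the explicit factorization.
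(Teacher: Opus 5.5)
Your proof is correct and follows the paper's route: Thue's lemma with $r=s=\lfloor\sqrt p\rfloor+1$ applied to a square root of $2$ modulo $p$, then bounding $u^2-2v^2$ and factoring $p$ into two elements of norm $\pm p$. The only difference is that you use $\lfloor\sqrt p\rfloor<\sqrt p$ (since $p$ is not a square) to get $-2p<u^2-2v^2<p$. This rules out the paper's cases $u^2-2v^2=-2p$ and $u^2-2v^2=p$ and leaves only $u^2-2v^2=-p$, a harmless streamlining.
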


\begin{proof}
	Since $p\equiv \pm1\pmod 8,$ I have Legendre symbol $\left(\frac{2}{p}\right)=1.$
	Hence there exists $y\in\mathbb{Z}_p$ such that $y^2\equiv 2\pmod p.$
	
	I use Thue's lemma(See \cite[Theorem~2.33]{Shoup2009}) in the following form: if $a,n,r,s\in\mathbb{Z}$ satisfy $0<r\leq n<rs,$ then there exist integers $u,v$, not both zero, such that $u\equiv av\pmod n$, $|u|<r$, $|v|<s$.

	Set $r=s=\lfloor\sqrt{p}\rfloor+1,
	\quad
	a=y,
	\quad
	n=p.$ Since $\sqrt{p}<\lfloor\sqrt{p}\rfloor+1,$ I have $
	p<\bigl(\lfloor\sqrt{p}\rfloor+1\bigr)^2=rs.$
	Thus Thue's lemma yields integers $u,v$, not both zero, such that
	\[
	u\equiv yv\pmod p,
	\qquad
	|u|<r,
	\qquad
	|v|<s.
	\]
	Since $u,v$ are integers, this gives $|u|,|v|\leq\lfloor\sqrt p\rfloor,$
	and hence $0\leq u^2,v^2\leq p.$
	
	Moreover, $u^2\equiv y^2v^2\equiv2v^2\pmod p.$
	
	Therefore $u^2-2v^2=pt$ for some $t\in\mathbb{Z}$. Since $0\leq u^2\leq p$ and $0\leq2v^2\leq2p,$ I obtain $-2p\leq u^2-2v^2\leq p.$
	
	Consequently, $u^2-2v^2\in\{-2p,-p,0,p\}.$
	
	The case $u^2-2v^2=0$ is impossible. Indeed, $u^2=2v^2$ implies $u=v=0$, since $\sqrt2\notin\mathbb{Q}$, contradicting the fact that $u$ and $v$ are not both zero.
	
	I now consider the remaining three cases.
	
	\medskip
	
	\noindent
	\textbf{Case 1:} Suppose $u^2-2v^2=-2p$.
	Then $u^2$ is even, so $u=2u_1$ for some $u_1\in\mathbb{Z}$. Hence $4u_1^2-2v^2=-2p,$ and therefore $p=v^2-2u_1^2$. 
	Thus
	\[
	p=(v+u_1\sqrt2)(v-u_1\sqrt2).
	\]
	Here $N(v+u_1\sqrt2)=p$, prime implies $v+u_1\sqrt2$ is irreducible.
	\medskip
	
	\noindent
	\textbf{Case 2:} Suppose $u^2-2v^2=-p$. Then
	\[
	p=2v^2-u^2
	=(u+v\sqrt2)(-u+v\sqrt2).
	\]
	Here $N(u+v\sqrt2)=-p$, prime implies $u+v\sqrt2$ is irreducible.
	\medskip
	
	\noindent
	\textbf{Case 3:} Suppose $u^2-2v^2=p$.
	Then
	\[
	p=(u+v\sqrt2)(u-v\sqrt2).
	\]
	Here $N(u+v\sqrt2)=p$, prime implies $u+v\sqrt2$ is irreducible.
	
	In each case, the two factors belong to $\mathbb{Z}[\sqrt2]$. Moreover, their norms have absolute value $p$ (or, in the first case, the displayed factors have norm $\pm p$). Since $p>1$, neither factor is a unit, because the units of $\mathbb{Z}[\sqrt2]$ have norm $\pm1$.
	
	Therefore $p$ is a product of two non-units in $\mathbb{Z}[\sqrt2]$. Hence $p$ is reducible in $\mathbb{Z}[\sqrt2]$.
\end{proof}


\end{document}